\theoremstyle{plain}
\newtheorem{introdefinition}{Definition}
\newtheorem{introtheorem}{Theorem}
\theoremstyle{plain}
\newtheorem{theorem}{Theorem}[section]
\newtheorem{proposition}[theorem]{Proposition}
\newtheorem{lemma}[theorem]{Lemma}
\theoremstyle{definition}
\newtheorem{definition}[theorem]{Definition}
\newtheorem{example}[theorem]{Example}
\newtheorem{coro}[theorem]{Corollary}
\newcommand{\C}{\mathbb{C}}
\renewcommand{\ge}{\geqslant}
\renewcommand{\le}{\leqslant}
\newcommand{\N}{\mathbb{N}}
\newcommand{\R}{\mathbb{R}}
\newcommand{\Sph}{\mathbb{S}}
\newcommand{\curl}{\mathrm{curl}}
\newcommand{\En}{\mathrm{E}}
\newcommand{\Flux}[3]{\mathrm{Flux}(#1,#2, #3)}
\newcommand{\Haar}{\mathrm{Haar}}
\newcommand{\Hel}{\mathrm{Hel}}
\newcommand{\kn}{\mathcal{K}}
\newcommand{\Lk}{\mathrm{Lk}}
\newcommand{\tr}[3]{\mathrm{tks}_{#3}(#1,#2)}
\newcommand{\Tr}[2]{\mathrm{Tks}(#1,#2)}
\newcommand{\ktr}[2]{\mathrm{tk}_{#2}(#1)}
\newcommand{\kTr}[1]{\mathrm{Tk}(#1)}
\begin{document}

\title{The trunkenness of a volume-preserving vector field}
\author{Pierre Dehornoy, Ana Rechtman}
\AtEndDocument{\bigskip{\footnotesize%
  \textsc{Univ. Grenoble Alpes, CNRS, IF, F-38000 Grenoble, France} \par  
  \url{pierre.dehornoy@univ-grenoble-alpes.fr} \par
  \url{http://www-fourier.ujf-grenoble.fr/~dehornop/}\par
}}
\AtEndDocument{\bigskip{\footnotesize%
  \textsc{Instituto de Matem\'aticas, Universidad Nacional Aut—noma de M\'exico, Ciudad Universitaria, 04510 Ciudad de M\'exico, M\'exico} \par  
  \url{rechtman@im.unam.mx} \par
  \url{http://www.matem.unam.mx/fsd/rechtman} \par 
}}
\date{first version Mai 7th, 2016; last correction July 18th, 2017}

\begin{abstract}
We construct a new invariant---the \emph{trunkenness}---for volume-perserving vector fields on~$\Sph^3$ up to volume-preserving diffeomorphism. 
We prove that the trunkenness is independent from the helicity and that it is the limit of a knot invariant (called the trunk) computed on long pieces of orbits.
\end{abstract}


\maketitle

\tableofcontents


The problem we address here is the construction of new invariants of volume-preserving vector fields on~$\Sph^3$ or on compact domains of~$\R^3$ up to volume-preserving diffeomorphisms. 
This problem is motivated by at least two physical situations. 
First if $v$ is the velocity field of a time-dependant ideal fluid satisfying the Euler equations (ideal hydrodynamics) then its vorticity field $\curl\,v$ is transported by the flow of~$v$~\cite{Helmholtz}.
Second if $B$ is the magnetic field in an incompressible plasma (ideal magnetodynamics), then $B$ turns out to be transported by the velocity field as long as the latter does not develop singularities~\cite{Woltjer}. 
In these contexts, invariants of $\curl\,v$ or $B$ up to volume-preserving diffeomorphisms yield time-independent invariants of the system. 

Not so many such invariants exist.
The first one was discovered by William Thomson~\cite{Thomson}: if the considered field has a periodic orbit or a periodic tube, then its knot type is an invariant (this remark led to the development of knot theory by Peter G. Tait~\cite{Tait}). 
However it may not be easy to find periodic orbits, and even then such an invariant only takes a small part of the field into account. 

The main known invariant is called \emph{helicity}. 
It is defined by the formula~$\Hel(v) = \int v\cdot u$, where $u=\curl^{-1}(v)$ is an arbitrary vector-potential of~$v$. 
It was discovered by Woltjer, Moreau, and Moffatt~\cite{Woltjer, Moreau, Moffatt}. 
Helicity is easy to compute or to approximate since it is enough to exhibit a vector-potential of the considered vector field, to take the scalar product and to integrate.
The connection with knot theory was sketched by Moffatt~\cite{Moffatt} and deepened by Arnold~\cite{Arnold} as follows. 
Denote by~$k_X(p,t)$ a loop starting at the point~$p$, tangent to the vector field~$X$ for a time~$t$ and closed by an arbitrary segment of bounded length. 
Denote by~$\Lk$ the linking number of loops. 
Arnold showed that for almost every~$p_1, p_2$, the limit~$\displaystyle{\lim_{t_1,t_2\to\infty} \frac1{t_1t_2}\Lk(k_X(p_1,t_1),k_X(p_2,t_2))}$ exists (see also~\cite{Vogel} for a corrected statement). 
Moreover if~$X$ is ergodic the limit coincides almost everywhere with~$\Hel(X)$ (for a non-ergodic vector field, one has to average the previous limit).

The idea of considering knot invariants of long pieces of orbits of the vector field was pursued by Gambaudo and Ghys~\cite{GG} who considered $\omega$-signatures of knots, Baader~\cite{Baader} who considered linear saddle invariants, and Baader and March\'e~\cite{BM} who considered Vassiliev's finite type invariants. 
In every case, it is shown that $\lim_{t\to\infty} \frac{1}{t^n}V(k_X(p,t))$ exists, where $V$ is the considered invariant and $n$ a suitable exponent called the \emph{order} of the asymptotic invariant. 
However all these constructions have the drawback that they do not yield any new invariant for ergodic vector fields, as in this case the obtained limits are all functions of the helicity. 

Recently, it was proved by Kudryavtseva \cite{Kud} for vector fields obtained by suspending an area-preserving diffeomorphism of a surface and for non-vanishing vector fields, and then by Enciso, Peralta-Salas and Torres de Lizaur \cite{EPT} for arbitrary volume-preserving vector fields, that every invariant that is \emph{regular integral} (in the sense that its Fr\'echet derivative is the integral of a continuous kernel) is a function of helicity, see the cited articles for precise statements.
These results give a satisfactory explanation of why most constructions yield invariants that are functions of helicity for ergodic vector fields. 
However they do not rule out the existence of other invariants, but imply that such invariants cannot be \emph{too} regular. 

An example of such another invariant is the asymptotic crossing number considered by Freedman and He~\cite{FH}. 
The advantage is that it is not proportional to helicity, but the disadvantage is that it is hard to compute, even on simple examples. 

\bigskip 
In this article we consider a less known knot invariant called the trunk (see Definition~\ref{D:TrunkKnot} below). 
It was defined by Ozawa~\cite{Ozawa}, building on the concept of \emph{thin position} that was introduced by Gabai~\cite{Gabai} for solving the R-conjecture. 
Less famous that the invariants previously studied in the context of vector fields, the trunk has the advantage that its definition relies on surfaces transverse to the considered knot, so that it is easy to transcript in the context of vector fields. 
The invariant depends on an invariant measure for the flow of the vector field that may or may not be a volume, and is invariant under diffeomorphisms that preserve this measure.
Given a $\mu$-preserving vector field~$X$ and a surface~$S$, the \emph{geometric flux} through~$S$ is the infinitesimal volume that crosses~$S$ in both directions (see Definition~\ref{D:Flux}), it is denoted by~$\Flux X \mu S$. 
Our invariant is a minimax of the geometric flux, where one minimizes over all height functions and maximizes over the levels of the considered height function.

\begin{introdefinition}\label{def-intro}
	Assume that $X$ is a vector field on~$\Sph^3$ or on a compact domain of~$\R^3$ that preserves a probability measure~$\mu$. Denote by~$\phi_X$ the flow of~$X$. 
	The \emph{trunkenness} of~$X$ with respect to~$\mu$ is
	\[ \Tr X \mu := \inf_{\substack{h\mathrm{~height}\\\mathrm{function}}} \ \max_{t\in[0,1]} \Flux{X}\mu{h^{-1}(t)}\ = \inf_{\substack{h\mathrm{~height}\\\mathrm{function}}} \ \max_{t\in[0,1]}\ \lim_{\epsilon\to 0}\ \frac 1 \epsilon \mu(\phi_X^{[0,\epsilon]}(h^{-1}(t))). \]
\end{introdefinition}

By a height function on $\Sph^3$ we refer to a function with only two
singular points and whose level sets are 2-dimensional spheres. On
$\R^3$, the level sets of a height function all are topological planes.

From the definition, it is straightforward that the trunkenness of a vector
field is invariant under diffeomorphisms that preserve the measure
$\mu$. More is true, the trunkenness is invariant under homeomorphisms that
preserve $\mu$.

\begin{introtheorem}\label{T:invariance}
	Assume that $X_1$ and $X_2$ are vector fields on ~$\Sph^3$ or on a compact domain of~$\R^3$ that preserve a probability measure~$\mu$ and that there is a $\mu$-preserving homeomorphism $f$ that conjugates the flows of $X_1$ and $X_2$. 
	Then we have
	\[\Tr {X_1}\mu=\Tr{X_2}\mu.\]
\end{introtheorem}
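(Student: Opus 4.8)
The plan is to show that the entire expression defining the trunkenness is transported term by term by the conjugating homeomorphism~$f$, so that the two infima range over matching quantities. The conjugation hypothesis means $\phi_{X_2}^t = f\circ\phi_{X_1}^t\circ f^{-1}$ for every~$t$, and the measure hypothesis means $\mu(f(A))=\mu(A)$ for every measurable set~$A$ (since $f_*\mu=\mu$). The heart of the argument is that these two facts together make the geometric flux a pointwise invariant of the conjugation.

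First I would establish the key identity for the geometric flux: for any surface~$S$,
\[ \Flux{X_2}\mu{f(S)} = \Flux{X_1}\mu S. \]
This follows by unwinding the definition. Using the conjugation relation one checks that $\phi_{X_2}^{[0,\epsilon]}(f(S)) = f\bigl(\phi_{X_1}^{[0,\epsilon]}(S)\bigr)$, because for $x\in S$ and $t\in[0,\epsilon]$ one has $\phi_{X_2}^t(f(x))=f(\phi_{X_1}^t(x))$. Since $f$ preserves~$\mu$, the measures of these two swept sets coincide for every~$\epsilon>0$, namely $\mu(\phi_{X_2}^{[0,\epsilon]}(f(S)))=\mu(\phi_{X_1}^{[0,\epsilon]}(S))$. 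Dividing by~$\epsilon$ and letting $\epsilon\to 0$ gives the identity; moreover the limit on one side exists exactly when it does on the other, so no well-definedness is lost in the passage.

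Second I would use the fact that $f$ acts on the collection of height functions by $h\mapsto h\circ f^{-1}$, and that this is a bijection. Indeed, the level sets of $h\circ f^{-1}$ are $f(h^{-1}(t))$, which are topological $2$-spheres because $f$ is a homeomorphism and $h^{-1}(t)$ is one, while the two singular points of $h\circ f^{-1}$ are the $f$-images of those of~$h$; hence $h\circ f^{-1}$ is again a height function, with inverse map $g\mapsto g\circ f$. Substituting $g=h\circ f^{-1}$ in the definition of $\Tr{X_2}\mu$ and applying the flux identity with $S=h^{-1}(t)$, each quantity $\Flux{X_2}\mu{f(h^{-1}(t))}$ equals $\Flux{X_1}\mu{h^{-1}(t)}$. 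Taking the maximum over~$t\in[0,1]$ and then the infimum over all height functions~$h$ yields $\Tr{X_2}\mu=\Tr{X_1}\mu$.

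The main obstacle I anticipate is the regularity mismatch: $f$ is only a homeomorphism, so $h\circ f^{-1}$ need not be smooth even when $h$ is. This is exactly why the notion of height function must be read topologically---only the conditions ``two singular points'' and ``spherical level sets'' are used, and both are preserved by homeomorphisms. I would therefore verify that the geometric flux $\Flux X\mu S$ makes sense for the merely topological spheres arising as level sets of $h\circ f^{-1}$, relying on the fact that the invariance identity needs nothing beyond measurability of the swept sets and the exact equality of their $\mu$-measures for each~$\epsilon$. Once this point is secured, no analysis of the critical behaviour of the height functions is required, and the theorem follows.
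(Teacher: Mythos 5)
Your overall strategy---transport everything by $f$---is the same as the paper's, and your key flux identity $\Flux{X_2}{\mu}{f(S)}=\Flux{X_1}{\mu}{S}$ is correct and is indeed the engine of the paper's argument as well (it is exactly the equality $\mu(\phi_{X_2}^{[0,\epsilon]}(f(h^{-1}(t))))=\mu(\phi_{X_1}^{[0,\epsilon]}(h^{-1}(t)))$ used there). The gap is in your second step. A height function in this paper is by definition $h_0\circ\phi$ (or $h_z\circ\phi$) for $\phi$ an orientation-preserving \emph{diffeomorphism}, so the infimum defining $\Tr{X}{\mu}$ ranges over smooth functions whose levels are smooth spheres or planes. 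Since $f$ is only a homeomorphism, $h\circ f^{-1}$ generally lies outside this class, so the map $h\mapsto h\circ f^{-1}$ is \emph{not} a bijection of the set of height functions, and you cannot simply substitute $g=h\circ f^{-1}$ in the infimum defining $\Tr{X_2}{\mu}$. Your proposed remedy---``read the notion of height function topologically''---does not repair this: it silently replaces the invariant by a different one (an infimum over a strictly larger class of functions), and to conclude you would still have to prove that the topological infimum coincides with the smooth one. That statement is precisely the nontrivial content of the theorem.

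The paper closes exactly this gap: it takes a minimizing sequence $h_n$ for $X_1$, forms the merely continuous functions $\widetilde h_n=h_n\circ f^{-1}$, approximates each by a genuine (smooth) height function $H_n$ that is $C^0$-close, and then argues that for the relevant level $T$ the flux of $X_2$ through $H_n^{-1}(T)$ differs from that through $\widetilde h_n^{-1}(T)=f(h_n^{-1}(T))$ by less than a prescribed $\delta/4$, before running a contradiction argument in two cases. The delicate point, which your proposal does not engage with at all, is why $C^0$-closeness of the level sets controls the geometric flux (a limit of $\frac1\epsilon\mu(\phi^{[0,\epsilon]}(\cdot))$, which is a priori sensitive to how the perturbed surface sits relative to the vector field, e.g.\ to its tangency locus). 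To make your argument complete you would need to add this approximation-and-flux-comparison step, or else prove directly that the infimum of $\max_t\Flux{X}{\mu}{g^{-1}(t)}$ over topological height functions $g$ equals the infimum over smooth ones.
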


What we do in this paper is to prove several properties of this new invariant. 
The first one is a continuity result that implies that the trunkenness of a vector field is an asymptotic invariant of order~$1$. 
For $K$ a knot, we denote by $\kTr K$ its trunk (see Definition~\ref{D:TrunkKnot} below).
 
\begin{introtheorem}
	\label{T:Continuity}	
	Suppose that $(X_n, \mu_n)_{n\in\N}$ is a sequence of measure-preserving vector fields such that $(X_n)_{n\in\N}$ converges to~$X$ and $(\mu_n)_{n\in\N}$ converges to~$\mu$ in the weak-$*$ sense. 
	Then we have
	\[\lim_{n\to\infty} \Tr{X_n} {\mu_n} = \Tr X \mu.\] 	
	In particular if $X$ is ergodic with respect to~$\mu$ then,
        for $\mu$-almost every~$p$, the limit 
        \[ \lim_{t\to\infty} \frac 1 t \kTr{k_X(p,t)} \]
	exists and is equal to~$\Tr X \mu$.
\end{introtheorem}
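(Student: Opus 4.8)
The plan is to establish the continuity statement first and then deduce the ergodic consequence by encoding each long piece of orbit as a singular measured flow whose trunkenness is literally the normalized trunk of that piece. For the continuity statement I would treat the two inequalities separately. For the upper bound $\limsup_n \Tr{X_n}{\mu_n}\le\Tr X\mu$, fix $\delta>0$ and a height function $h$ with $\max_r\Flux{X}{\mu}{h^{-1}(r)}\le\Tr X\mu+\delta$; it then suffices to show that for a \emph{fixed} height function the geometric flux is jointly continuous in $(X,\mu)$, uniformly in the level $r$. Using the formula $\Flux{X}{\mu}{h^{-1}(r)}=\lim_{\epsilon\to0}\frac1\epsilon\mu(\phi_X^{[0,\epsilon]}(h^{-1}(r)))$, the flow $\phi_{X_n}^{[0,\epsilon]}$ converges to $\phi_X^{[0,\epsilon]}$ with the vector fields, the swept regions converge, and weak-$*$ convergence of $\mu_n$ controls their measures provided the boundaries are $\mu$-null, which holds for all but countably many levels and can be arranged by a small perturbation of $h$.

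For the lower bound $\liminf_n\Tr{X_n}{\mu_n}\ge\Tr X\mu$ I would argue by contradiction: near-optimal height functions $h_n$ for $(X_n,\mu_n)$ with $\max_r\Flux{X_n}{\mu_n}{h_n^{-1}(r)}\to L<\Tr X\mu$ would, after passing to a convergent subsequence $h_n\to h$, produce a limiting height function witnessing $\max_r\Flux{X}{\mu}{h^{-1}(r)}<\Tr X\mu$, contradicting the definition of the trunkenness as an infimum. The main obstacle is exactly this extraction: the space of height functions is not compact, so I expect to first normalize each $h_n$ (reparametrizing levels so the flux is spread as evenly as possible and controlling the geometry of the level spheres) and then apply an Arzel\`a--Ascoli-type compactness together with lower semicontinuity of the flux under this convergence. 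The delicate point is that under such limits the flux can only jump \emph{down}, which is precisely what the contradiction requires.

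For the ergodic consequence I would attach a measured flow to each orbit piece. Writing $K_t:=k_X(p,t)$, let $X_t$ be a vector field tangent to the closed curve $K_t$ (equal to $X$ along the orbit part, extended arbitrarily off $K_t$), and let $\mu_t$ be the normalized occupation measure carried by $K_t$, namely the pushforward of $\frac1t\,ds$ under $s\mapsto\phi_X^s(p)$. The pair $(X_t,\mu_t)$ is measure-preserving, and the key computation is that for a surface $S$ transverse to $K_t$ one has $\Flux{X_t}{\mu_t}{S}=\frac1t\,|K_t\cap S|$: near each transverse crossing the region $\phi_{X_t}^{[0,\epsilon]}(S)$ captures an arc of $K_t$ of occupation measure $\frac\epsilon t$, and summing over the $|K_t\cap S|$ crossings and dividing by $\epsilon$ gives the claim. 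Since all but finitely many levels of any height function are transverse to $K_t$, taking the maximum over levels and the infimum over the same class of height functions used to define the trunk $\kTr{K_t}=\min_h\max_r|h^{-1}(r)\cap K_t|$ yields the exact identity $\Tr{X_t}{\mu_t}=\frac1t\kTr{K_t}$.

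Finally, by Birkhoff's ergodic theorem the occupation measures $\mu_t$ converge weak-$*$ to $\mu$ for $\mu$-almost every $p$ (ergodicity identifies the limit with $\mu$), while the closing segment of bounded length contributes $O(1/t)$ to both the occupation measure and the flux and is therefore negligible. Applying the continuity statement to $(X_t,\mu_t)\to(X,\mu)$ then gives $\frac1t\kTr{k_X(p,t)}=\Tr{X_t}{\mu_t}\to\Tr X\mu$, as claimed. The one point that needs attention is that the continuity statement must tolerate these singular measures and the fact that $X_t$ agrees with $X$ only on $\mathrm{supp}(\mu_t)=K_t$; since the geometric flux, and hence the trunkenness, depends on the vector field only through its flow on the support of the measure, the arbitrary extension off $K_t$ is irrelevant and the closing segment washes out in the limit.
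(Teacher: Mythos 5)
Your upper bound and your treatment of the ergodic consequence are essentially the paper's: fixing a near-optimal height function for $(X,\mu)$ and using continuity of the flux through its levels in $(X_n,\mu_n)$ gives $\limsup_n\Tr{X_n}{\mu_n}\le\Tr X\mu$, and encoding $k_X(p,t)$ as a flow carrying its normalized occupation measure, so that the geometric flux through a transverse surface is $\frac1t$ times the intersection number and the trunkenness is $\frac1t$ times the trunk, combined with Birkhoff's theorem, is exactly how the paper deduces the asymptotic statement.

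The genuine gap is in your lower bound. You propose to extract, from near-optimal height functions $h_n$ for $(X_n,\mu_n)$, a limiting height function $h$ by an Arzel\`a--Ascoli argument after ``normalization,'' and then to invoke semicontinuity of the flux. Neither step is available. Height functions form the orbit of $h_0$ under the non-compact diffeomorphism group; there is no evident normalization that makes such a sequence precompact while keeping control of the fluxes, the level spheres of the $h_n$ can become arbitrarily wild or degenerate so that a $C^0$ limit need not be a height function at all, and the flux through $h_n^{-1}(r)$ depends on the geometry of the level surface rather than on the function alone, so $C^0$ convergence of the $h_n$ gives no semicontinuity of $r\mapsto\Flux{X}{\mu}{h_n^{-1}(r)}$. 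The paper never takes a limit of height functions: for $n$ large the fluxes of $(X_n,\mu_n)$ and of $(X,\mu)$ through one and the same surface differ by at most $\epsilon$, so a near-optimal $h_n$ for $(X_n,\mu_n)$ is itself an admissible test function for $(X,\mu)$ and yields $\tr X\mu{h_n}\le\Tr{X_n}{\mu_n}+2\epsilon<\Tr X\mu$, contradicting that $\Tr X\mu$ is an infimum over all height functions. Replacing your compactness step by this direct test-function argument closes the gap; the only analytic input then needed, in either direction, is the comparison of fluxes through a given surface, uniformly over the levels involved --- the same ingredient your upper bound already uses.
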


This continuity result then allows us to compute the trunkenness of some explicit vector fields on~$\Sph^3$ called Seifert flows. 
These computations in turn show that the trunkenness is not dictated by helicity, even in the case of ergodic vector fields, thus contrasting with most previously known knot-theoretical constructions. 

\begin{introtheorem}
	\label{T:Seifert}
	There is no function~$f$ such that for every ergodic volume-preserving vector field~$X$ on~$\Sph^3$ one has $\Tr X \mu = f(\Hel(X, \mu))$.
\end{introtheorem}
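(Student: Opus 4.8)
\section*{Proof proposal}

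The plan is to disprove the existence of such an $f$ by producing two \emph{ergodic} volume-preserving fields on $\Sph^3$ with equal helicity but distinct trunkenness. The mechanism rests on two homogeneity properties. A constant time change $X\mapsto\lambda X$ with $\lambda>0$ satisfies $\phi_{\lambda X}^{[0,\epsilon]}=\phi_X^{[0,\lambda\epsilon]}$, so the geometric flux is homogeneous of degree one, $\Flux{\lambda X}\mu S=\lambda\,\Flux X\mu S$, and taking the minimax gives $\Tr{\lambda X}\mu=\lambda\,\Tr X\mu$; meanwhile the vector potential of $\lambda X$ for the \emph{same} $\mu$ is $\lambda$ times that of $X$, so $\Hel(\lambda X,\mu)=\lambda^2\,\Hel(X,\mu)$. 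Hence the ratio $R(X):=\Tr X\mu/\sqrt{|\Hel(X,\mu)|}$ is \emph{scale invariant}, and the whole statement reduces to finding two ergodic fields with $R(Y_1)\neq R(Y_2)$ and helicities of the same sign: one then solves $\lambda_1^2\Hel(Y_1,\mu)=\lambda_2^2\Hel(Y_2,\mu)$, notes that a constant time change preserves both $\mu$ and ergodicity, and obtains two ergodic fields $\lambda_1Y_1,\lambda_2Y_2$ sharing a helicity value while their trunkennesses $R(Y_i)\sqrt{\lambda_i^2|\Hel(Y_i,\mu)|}$ differ.

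First I would carry out the two computations on the Seifert flows $X_{p,q}$ generating the rotation $(z_1,z_2)\mapsto(e^{ipt}z_1,e^{iqt}z_2)$ of $\Sph^3\subset\C^2$, which preserve the round probability volume $\mu$ and whose generic orbit is a $(p,q)$-torus knot. The helicity is routine: writing $\beta=i_{X_{p,q}}\mu$ and solving $\beta=d\alpha$ for a \emph{smooth} potential yields $\Hel(X_{p,q},\mu)$ proportional to $pq$. The trunkenness is the genuinely geometric step: I would evaluate the minimax of $\Flux{X_{p,q}}\mu{h^{-1}(t)}$ over height functions, identifying the optimal sweep-out and the worst level and relating it to the trunk of torus knots, so as to obtain an explicit formula (of the shape $\min(|p|,|q|)$, up to the volume normalization). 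Comparing two distinct types $(p_1,q_1),(p_2,q_2)$ then gives $R(X_{p_1,q_1})\neq R(X_{p_2,q_2})$, precisely because $\Tr{\cdot}\mu$ and $\sqrt{|\Hel(\cdot,\mu)|}$ are not proportional as functions of $(p,q)$.

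Finally, since these Seifert flows are not ergodic, I would transfer the conclusion. Both invariants are continuous in the relevant topology---trunkenness by Theorem~\ref{T:Continuity}, helicity because it is a regular invariant---so it suffices to approximate two Seifert types (weak-$*$ on $\mu$ and in the field) by \emph{ergodic} volume-preserving flows $Y_1,Y_2$, which exist by the genericity of volume-ergodicity near circle-type flows (Anosov--Katok-type constructions). For a close enough approximation each $R(Y_i)$ stays near $R(X_{p_i,q_i})$, so $R(Y_1)\neq R(Y_2)$, and the rescaling of the first paragraph produces the desired pair, contradicting $\Tr{\cdot}\mu=f(\Hel(\cdot,\mu))$. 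The hard part will be the trunkenness computation for $X_{p,q}$: as for every minimax quantity, the delicate direction is the lower bound, namely showing that \emph{no} height function beats the claimed flux; a secondary technical point is keeping the ratio $R$ controlled under the ergodic approximation, which is exactly the role played by the continuity theorem.
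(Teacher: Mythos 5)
Your overall architecture coincides with the paper's: compute helicity ($=\alpha\beta$) and trunkenness ($=2\min(\alpha,\beta)$) for the Seifert flows, observe that $\min(\alpha,\beta)$ is not a function of $\alpha\beta$, then transfer to genuinely ergodic fields by combining Katok's ergodic perturbation theorem with the continuity of both invariants (Theorem~\ref{T:Continuity} for the trunkenness), and finally rescale to equalize helicities. The paper does exactly this with $X_{1,8}$ and $X_{2,4}$, whose helicities are both $8$ while their trunkennesses are $2$ and $4$; your scale-invariant ratio $R$ is a mild repackaging of the same degree-$1$/degree-$2$ homogeneity that the paper exploits in its final rescaling step.

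The one genuine gap is the step you yourself flag as the hard part and then leave blank: the lower bound $\Tr{X_{p,q}}{\Omega_\Haar}\ge 2\min(p,q)$. Describing the ``optimal sweep-out and the worst level'' only yields the upper bound; to rule out \emph{every} competing height function the paper does not argue level-by-level at all. Instead it approximates $\Omega_\Haar$ in the weak-$*$ sense by normalized Dirac measures supported on $n$-component links $K_n$ of periodic orbits (cablings of $T(p,q)$ with $n$ strands, each of period $1$), applies Theorem~\ref{T:Continuity} a second time to get $\Tr{X_{p,q}}{\Omega_\Haar}=\lim_{n\to\infty}\frac1n\kTr{K_n}$, and invokes Zupan's theorem that the trunk of such a cabling is $2n\min(p,q)$. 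Without this (or some substitute for the lower bound) the inequality $R(X_{p_1,q_1})\neq R(X_{p_2,q_2})$ on which your whole reduction rests is unproved. A smaller point: you should check that the ergodic perturbations are close in the sense required by Theorem~\ref{T:Continuity}, i.e.\ convergence of the vector fields together with weak-$*$ convergence of the invariant measures, which is what the $C^1$-smallness in Katok's theorem provides.
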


Finally we adress the question of what happens if for a non-singular vector field on $\Sph^3$ there is a function that achieves the trunkenness, or in other words if the infimum in Definition~\ref{def-intro} is a minimum. 

\begin{introtheorem}\label{T:minperiodic} 
Let $X$ be a  non-singular vector field on $\Sph^3$ preserving the measure $\mu$ and $h$ a height function  such that 
\[
	\Tr X \mu= \max_{t\in[0,1]} \Flux X \mu {h^{-1}(t)}
\]
Then $X$ has an unknotted periodic orbit.
\end{introtheorem}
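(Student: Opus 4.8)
The plan is to reduce the statement to producing a periodic orbit that lies entirely inside a single level sphere, from which unknottedness comes for free. Fix a height function $h$ realizing the infimum, normalized so that $h\colon\Sph^3\to[0,1]$, and write $S_t=h^{-1}(t)$. The object to focus on is the \emph{tangency locus} $\Sigma=\{p:X(h)(p)=0\}$, the set of points where $X$ is tangent to the level sphere through them. The first thing I would record is that any periodic orbit contained in $\Sigma$ automatically lies in a single level sphere: along such an orbit one has $\frac{d}{ds}h(\phi_X^s(p))=X(h)=0$, so $h$ is constant along it. Since a simple closed curve drawn on the $2$-sphere $S_t$ bounds a disc inside $S_t\subset\Sph^3$, it is unknotted. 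Thus it suffices to show that the optimality of $h$ forces $X$ to possess a periodic orbit inside $\Sigma$, and I would search for it on the maximal level(s) $S_{t^*}$, where $\Flux{X}\mu{S_{t^*}}=\Tr X\mu$, since that is precisely where the hypothesis on $h$ carries content.

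Before invoking optimality I would set up two structural facts about the levels. First, because $X$ preserves $\mu$, the \emph{signed} flux through each sphere $S_t$ vanishes (the divergence theorem applied to the ball $\{h<t\}$); consequently on every level the upward-flow and downward-flow regions carry equal flux and are separated exactly by the tangency curve $\gamma_t=\Sigma\cap S_t$. Second, since $X$ is non-singular, no level sphere can be everywhere tangent to $X$: by Poincar\'e--Hopf the tangential projection of $X$ to $S_t$ must vanish somewhere, so there are points where $X$ is normal to $S_t$, whence $\Flux{X}\mu{S_t}>0$ and in particular $\Tr X\mu>0$. These two facts constrain the tangency curve $\gamma_{t^*}$ on the maximal sphere and guarantee that the maximal level genuinely sees transverse flow.

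The heart of the argument, and the step I expect to be the main obstacle, is a variational thin-position mechanism run in the contrapositive: \emph{assuming} that $X$ has no periodic orbit inside $\Sigma$, I would construct a competitor height function whose maximal flux is strictly smaller than $\Tr X\mu$, contradicting the optimality of $h$. The idea is that, absent a closed tangency orbit, every orbit meeting $\gamma_{t^*}$ passes genuinely from $\{h<t^*\}$ to $\{h>t^*\}$ without looping back within the level, which provides, near each tangency, an isotopy of the sweepout that tilts the level surface to follow the flow, lowers the normal component $|X\cdot n|$, and thereby redistributes the flux over a thicker band of levels each of strictly smaller flux. Making this reduction global and quantitative is the delicate point: one must control how $\Flux{X}\mu{S_t}$ varies under isotopies of the sweepout, keep every competitor an honest height function (spherical levels, two critical points), and rule out a recurrence obstruction to carrying out the reduction everywhere simultaneously. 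I expect the cleanest way to manufacture the orbit itself, rather than merely an invariant set, is to show that optimality yields a non-empty compact flow-invariant subsurface inside a single maximal sphere and then to apply Poincar\'e--Bendixson to it: as $X$ is non-singular the limit set contains no rest point, so it is a periodic orbit, which by the first paragraph is unknotted. The degenerate case where the maximum is attained on a whole interval of levels, a plateau $h^{-1}([a,b])\cong\Sph^2\times[a,b]$, should be handled separately, either by reducing it to the generic case or by locating the invariant curve directly inside the plateau.
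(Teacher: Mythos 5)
Your overall architecture coincides with the paper's: work on a maximal level $S=h^{-1}(t_{\max})$, argue in the contrapositive by deforming the sweepout to strictly lower the maximal flux, and obtain the periodic orbit via Poincar\'e--Bendixson inside the level sphere, whence unknottedness is immediate. The preliminary observations are essentially sound (the paper gets non-emptiness of $S^+$ and $S^-$ from Brouwer's fixed point theorem applied to a time-$t_0$ map on one of the complementary balls, rather than from Poincar\'e--Hopf plus zero signed flux; note that your ``$\Flux{X}{\mu}{S_t}>0$'' needs $\mu$ to charge a flowbox at the transverse point, which is automatic for a volume form but not for a general invariant measure). However, the step you yourself flag as ``the main obstacle'' --- making the flux-reducing deformation work --- is exactly where the content of the theorem lies, and you leave it as a declared intention rather than an argument. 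Two specific ingredients are missing. First, Poincar\'e--Bendixson is not used at the end to ``manufacture the orbit from an invariant subsurface''; it is used at the outset: if $X$ has no periodic orbit in $S$, then no half-orbit of a point of $S^t$ can stay in $S$, so every orbit segment contained in $S^t$ is compact with both endpoints on $B=\partial S^t$. This is what makes the flowbox construction around such a segment possible at all. Second, and crucially, your picture that ``every orbit meeting the tangency curve passes genuinely from $\{h<t^*\}$ to $\{h>t^*\}$'' is false: an orbit can graze $S$ along $S^t$ and return to the same side, entering and exiting through the same-signed part of $B$. The paper's key combinatorial step (its Proposition on $B^\pm$) is precisely the dichotomy you are missing: either some orbit segment in $S^t$ joins a point of $B^-$ to a point of $B^+$ --- and only for such a mixed-sign segment does the flowbox modification (replacing the level by a surface whose projection to a transversal is a proper subset) strictly decrease the flux --- or else every such segment has both endpoints of the same sign, in which case the modification does not decrease the flux but cuts strips across the tangency components, eventually reducing $S^t$ to a union of discs that cannot separate $S^+$ from $S^-$, a contradiction. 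Without this dichotomy there is no mechanism forcing a strict decrease, and the contradiction with optimality of $h$ is not reached.

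A further point you defer but that requires real work is the plateau case where the maximum of $t\mapsto\Flux{X}{\mu}{h^{-1}(t)}$ is attained on an interval: the paper handles it by iterating the local reduction from one end of the interval, shrinking it step by step, and observing that the process can only be obstructed by the boundary of the tangency locus consisting of periodic orbits --- which is the desired conclusion. As written, your proposal is a correct plan matching the paper's strategy, but it is not yet a proof: the variational heart of the argument is acknowledged rather than executed.
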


One of the main motivations for constructing topological invariants of a vector field~$X$ is to find lower bound on the energies~$\En_p(X) := \int \vert X\vert^p\,d\mu$. 
Indeed since a topological invariant yields a time-independent invariant of the physical system, an energy bound in term of a topological invariant will also be time-independent, although the energy may vary when the vector field is transported under (volume-preserving) diffeomorphisms. 
Such energy bounds exist for the helicity and for the asymptotic crossing number.
We do not know whether the trunkenness bounds the energy.

The plan of the article is as follows.
First we recall in Section~\ref{S:TrunkKnot} the definition of the trunk of a knot in order to make the definition for vector fields natural. 
Then we define the trunkenness of a measure-preserving vector field and
prove Theorems~\ref{T:invariance} and \ref{T:Continuity} in Section~\ref{S:TrunkField}.
Using this result we compute of trunkenness of Seifert vector fields and
prove Theorem~\ref{T:Seifert} in Section~\ref{S:Seifert}. 
We prove Theorem~\ref{T:minperiodic} in Section~\ref{sec-periodic}. 
Finally in Section~\ref{S:Examples} we compute the trunkenness of some vector fields
supported in the tubular neighborhood of a link. 

\vspace{2mm}
\noindent\emph{Acknowledgements.} We thank Michel Boileau who suggested to study the trunk instead of the genus for vector fields during a visit of P.D. to Toulouse in 2013.


\section{Trunk of knots}
\label{S:TrunkKnot}

For $K$ a knot, we denote by~$\kn$ the set of all embeddings of~$K$ into~$\R^3$. 
The \emph{standard height function} on~$\R^3$ is the function~$h_z:\R^3\to\R, (x,y,z)\mapsto z$. 
Every level $h_z^{-1}(t)$ is a 2-dimensional plane.
An embedding~$k\in\kn$ is said to be in \emph{Morse position} with respect to~$h_z$ if the restriction of~$h_z$ to~$k$ is a Morse function. In this case there are only finitely many points at which $k$ is tangent to a level of~$h_z$.

\begin{definition}
	\label{D:TrunkKnot}
	Assume that $k$ is an embedded knot in $\R^3$ that is in Morse position with respect to~$h_z$. The \emph{trunk of the curve~$k$ relatively to~$h_z$} is 
	\[\ktr k {h_z} := \max_{t\in\R}\sharp \{ k\cap h_z^{-1}(t)\}.\] 
	
	The \emph{trunk of a knot~$K$} is then defined by
	\[ \kTr K := \min_{k\in\kn}\, \ktr k {h_z} = \min_{k\in\kn}\,\max_{t\in\R}\,\sharp \{ k\cap h_z^{-1}(t)\}. \]
\end{definition}

The trunk of a knot was defined by M. Ozawa~\cite{Ozawa} and
motivated by D. Gabai's definition of the waist of a knot \cite{Gabai}.

\begin{example}
A knot is trivial if and only if its trunk equals~$2$. Indeed the embedding as the boundary of a vertical disc shows that the trunk is less than or equal to $2$, and  every embedding in Morse position of the trivial knot has to intersect some horizontal plane in at least two points. 
Conversely, if the trunk of a knot is equal to 2, then it admits an embedding that intersects every horizontal plane in at most two points. 
The union of the segments that connect these pairs of points is a disc bounded by the knot, implying that the knot is trivial.
\end{example}

\begin{figure}
	\includegraphics[width=.4\textwidth]{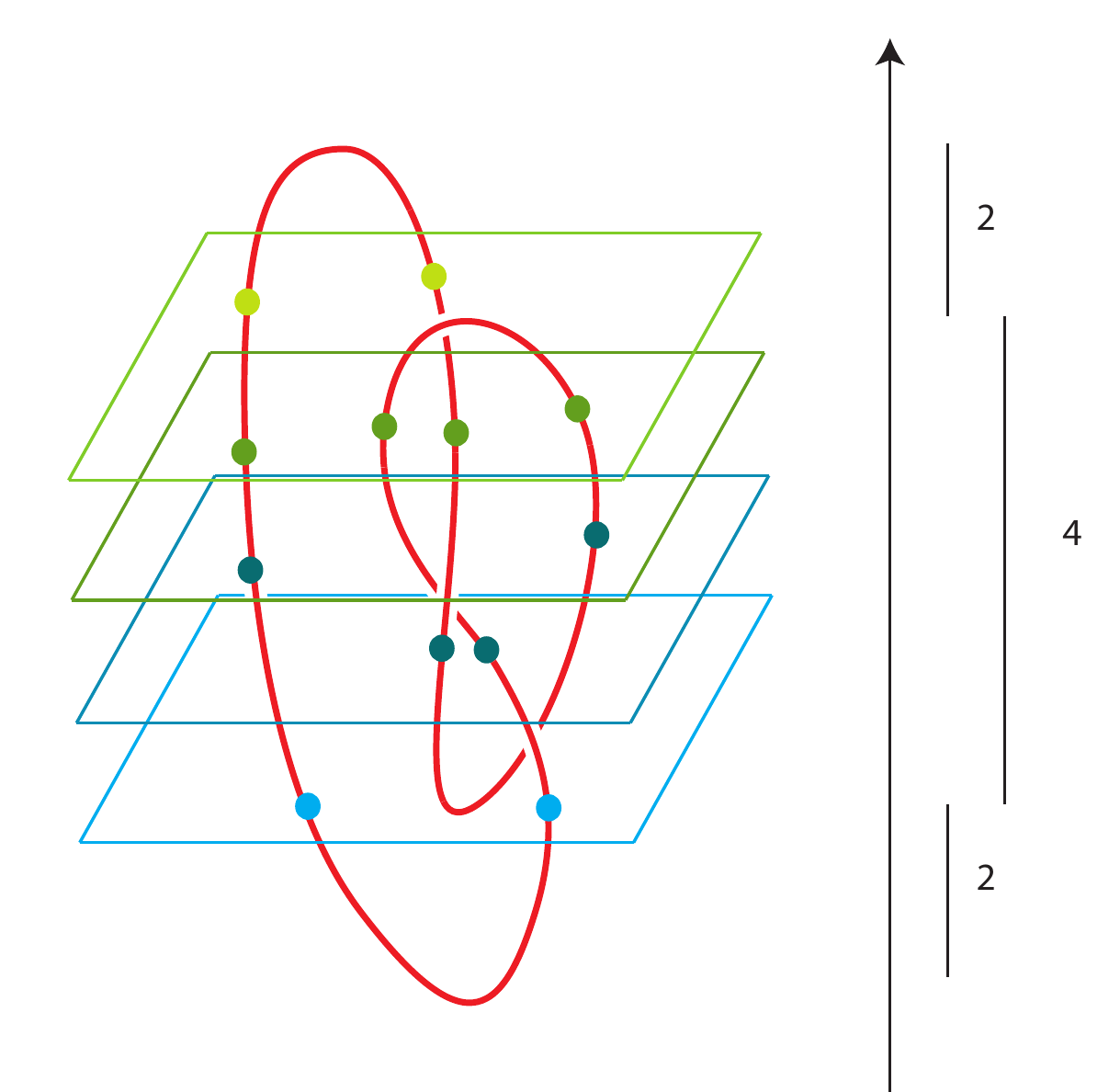}
	\caption{\small The trunk of the trefoil knot: the maximal number of intersection points between a horizontal level and the proposed embedding is 4. This number cannot be reduced under isotopy, hence the trunk of the trefoil is 4.}
	\label{F:TrunkKnot}
\end{figure}

\begin{example}
	For $p,q$ in~$\N$, the torus knot~$T(p,q)$ can be realized as the closure of a braid with $q$ strands, yielding $\kTr {T(p,q)}\le 2q$. 
	By symmetry one also gets~$\kTr {T(p,q)}\le 2p$. 
	Actually, one can prove~$\kTr{T(p,q)}=2\min(p,q)$, see Remark~1.2 in \cite{Ozawa}.
\end{example}

Instead of fixing the function and changing the knot up to isotopy, one can fix the knot and change the function up to orientation preserving diffeomorphism.
With this in mind, one defines a \emph{height function} on~$\R^3$ as a function obtained by precomposing~$h_z$ by a diffeomorphism, that is, a function of the form 
\begin{eqnarray*}
	h:\R^3 & \to &\R\\ 
	(x,y,z) & \mapsto & h_z(\phi(x,y,z))
\end{eqnarray*} 
for $\phi$ an orientation preserving diffeomorphism of $\R^3$. 
In particular, a height function is a function whose levels are smooth planes. 
For $K$ a knot and $k$ a fixed embedding of~$K$ in~$\R^3$, one can then define 
	\[\ktr k h := \max_{t\in\R}\sharp \{ k\cap h^{-1}(t)\},\] 
so that we have the alternative definition
	\begin{equation}\label{eq:defTr}
	\kTr K = \min_{\substack{h\mathrm{~height}\\\mathrm{function}}} \,\ktr k h =\min_{\substack{h\mathrm{~height}\\\mathrm{function}}}\,\max_{t\in\R}\,\sharp \{ k\cap h^{-1}(t)\}. 
	\end{equation}


\section{Trunkenness of measure preserving vector fields}
\label{S:TrunkField}

We use the definition of Equation~\eqref{eq:defTr} to define the trunkenness of a vector field with respect to an invariant measure. 
The main question then concerns the analog of the number of intersection points of a surface with a curve when the curve is replaced by a vector field. 
A natural answer is the \emph{geometric flux}. 
If $X$ is a vector field that preserves a measure~$\mu$ given by a volume element~$\Omega$, one can then consider the 2-form~$\iota_X\Omega$. 
For $S$ a piece of oriented surface that is positively transverse to~$X$, the integral $\int_S\iota_X\Omega$ computes the instantaneous volume that crosses~$S$. 
In other words, by Fubini Theorem we have $\mu(\phi^{[0,t]}(S)) = (\int_S\iota_X\Omega)\cdot t$. 
On the other hand if $S$ is negatively transverse to~$X$ we have $\mu(\phi^{[0,t]}(S)) = -(\int_S\iota_X\Omega) \cdot t$. 
Therefore in this case, for any surface~$S$, the instantaneous volume crossing~$S$ is given by $\int_S \vert\iota_X\Omega\vert$. 
Now if the measure~$\mu$ is not given by integrating a volume form one cannot consider the above integral, but the quantity $\mu(\phi^{[0,t]}(S))$ still makes sense for any piece of surface~$S$ (see Figure~\ref{F:Surface}).

\begin{figure}
	\includegraphics[width=.4\textwidth]{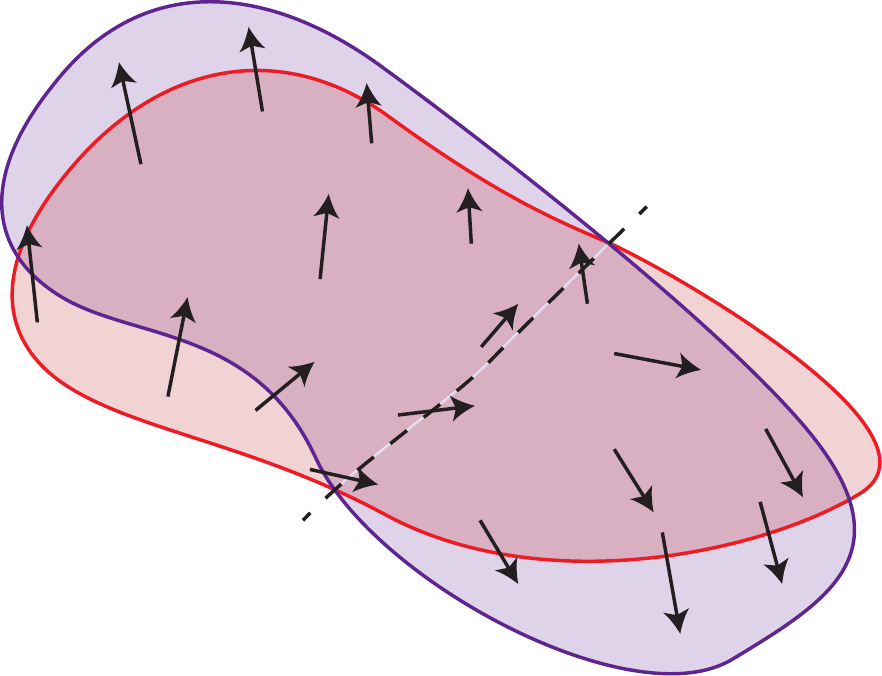}
	\caption{\small For $S$ a piece of a surface (in red), the domain $\phi^{[0,\epsilon]}(S)$ is roughly the part of the space located between $S$ and $\phi^\epsilon(S)$. 
	It is pinched around those points where the vector field is tangent to~$S$. 
	Its volume is~$\epsilon\cdot\Flux X \mu S$ at the first order.}
	\label{F:Surface}
\end{figure}

\begin{definition}\label{D:Flux}
	For $X$ a vector field that preserves a measure~$\mu$ and for $S$ a surface, the \emph{geometric flux} of~$(X,\mu)$ through~$S$ is
	\[\Flux X \mu S :=  \lim_{\epsilon\to 0}\frac{1}{\epsilon} \,\mu(\phi^{[0,\epsilon]}(S)). \]
\end{definition}

This definition generalizes the number of intersection points of a knot with a surface. 
Indeed one can see an embedding~$k$ of a knot $K$ as a vector field with a particular invariant measure in the following way: 
consider a non-singular vector field~$X_k$ that is tangent to~$k$ at every point and denote by~$\phi_k^t$ the induced flow. Since $k$ is closed, $\phi_k$ is $T_k$-periodic for some~$T_k>0$. The Dirac linear measure associated to~$X_k$ is defined by
$$\mu_k(A) = \mathrm{Leb}(\{t\in[0,T_k], \phi_k^t(x)\in A\})$$ 
where $A$ is a measurable set and $x$ an arbitrary point on $k$.
The measure~$\mu_k$ is $X_k$-invariant and has total mass~$T_k$.
In this setting, for $S$ a surface that intersects~$k$ in finitely many points, a point $p$ in the set $k\cap S$ has $\mu_k$-measure zero and thus cannot be detected by the measure. 
But by definition of~$\mu_k$ the set $\phi_k^{[0,\epsilon]}(p)$ is an arc of~$k$ of $\mu_k$-measure~$\epsilon$ and since $k\cap S$ is made of finitely many points, for $\epsilon$ small enough, the set $\phi_k^{[0,\epsilon]}(k\cap S)$ has $\mu_k$-measure exactly $\epsilon\cdot \sharp\{ k\cap S\}$.
In other words, one has 
	\[\sharp\{ k\cap S\} = \lim_{\epsilon\to 0}\frac{1}{\epsilon} \,\mu_k(\phi_k^{[0,\epsilon]}(k\cap S)).\]
As $\mu_k$ is concentrated on~$k$, we thus have 
\begin{equation}\label{eq:measureintersection}
	\sharp \{ k\cap S\} = \lim_{\epsilon\to 0}\frac{1}{\epsilon} \,\mu_k(\phi_k^{[0,\epsilon]}(S)) = \Flux {X_k} {\mu_k} S,
\end{equation}
so the geometric flux indeed generalizes the number of intersection points.

\medskip
We now mimic for vector fields the definition of the trunk of a knot.
In order to have a well-defined maximum, in what follows we assume the vector fields are on  a compact domain~$D^3\subset\R^3$ or on the 3-sphere~$\Sph^3=\R^3\cup\infty$. 
In the later case, we define the \emph{standard height function}
\begin{eqnarray*}
	h_0:\Sph^3 & \to & [0,1]\\ 
	(x,y,z) & \mapsto & 1-\frac{1} {1+x^2+y^2+z^2}.
\end{eqnarray*} 
The levels~$h_0^{-1}(0)$ and~$h_0^{-1}(1)$ consist of the points~$(0,0,0)$ and~$\infty$ respectively and every other level $h_0^{-1}(t)$ is a 2-dimensional sphere of radius~$\sqrt{t/(1-t)}$ centered at the origin.
A \emph{height function} on~$\Sph^3$ is then a function obtained by precomposing by an orientation preserving  diffeomorphism $\phi$ of $\Sph^3$, that is, a function of the form $h:\Sph^3\to[0,1], (x,y,z)\mapsto h_0(\phi(x,y,z))$.

\begin{definition}
	\label{D:TrunkField}
	Let $X$ be a vector field whose flow  preserves a measure~$\mu$ on a compact domain of $\R^3$ or on $\Sph^3$, and $h$  a height function. We set
	\[\tr X \mu h := \max_{t\in[0,1]}\ \Flux X \mu {h^{-1}(t)} = \max_{t\in[0,1]}\,\lim_{\epsilon\to 0}\,\frac 1 \epsilon \,\mu(\phi^{[0,\epsilon]}(h^{-1}(t))).\] 
	The \emph{trunkenness of~$(X, \mu)$} is defined as
	\begin{eqnarray*}
		\Tr X \mu := \inf_{\substack{h\mathrm{~height}\\\mathrm{function}}}\, \tr X \mu h 
		&=& \inf_{\substack{h\mathrm{~height}\\\mathrm{function}}} \,\max_{t\in[0,1]}\, \Flux X \mu {h^{-1}(t)}\\
		&=&\inf_{\substack{h\mathrm{~height}\\\mathrm{function}}} \,\max_{t\in[0,1]}\,\lim_{\epsilon\to 0}\,\frac 1 \epsilon\, \mu(\phi^{[0,\epsilon]}(h^{-1}(t))). 
	\end{eqnarray*}
\end{definition}

Note that we can only consider an infimum instead of a minimum as in the case of knots. 
In Section~\ref{sec-periodic} we prove that for non-singular vector fields, if the trunkenness is a minimum then the vector field possesses an unknotted periodic orbit. 

If the invariant measure~$\mu$ is given by the integration of a volume form~$\Omega$, we get the alternative definitions
	\[\tr X \Omega h = \max_{t\in[0,1]} \, \int_{h^{-1}(t)} \vert\iota_X\Omega\vert, \quad\mathrm{and}\quad \Tr X \Omega = \inf_{\substack{h\mathrm{~height}\\\mathrm{function}}} \,\max_{t\in[0,1]} \, \int_{h^{-1}(t)} \vert\iota_X\Omega\vert.\] 
	
\medskip

We now prove the invariance under homeomorphism.
	
\begin{proof}[Proof of Theorem~\ref{T:invariance}]
Assume without loss of generality that there is a homeomorphism~$f$ such that $f\cdot \phi_{X_1}^t=\phi_{X_2}^t\cdot f$, with $\phi_{X_i}^t$ the flow of $X_i$, and assume by contradiction that $\Tr {X_1} \mu<\Tr {X_2} \mu$. 
Let $0<\delta=\Tr {X_2}\mu-\Tr{X_1}\mu$. 
Take $h_n$ to be a sequence of (differentiable) height functions such that
$$\lim_{n\to \infty}\tr{X_1}\mu {h_n}=\Tr {X_1}\mu.$$

Consider the continuous functions $\widetilde{h}_n=h_n\cdot f^{-1}$ and for each $n$ consider a differentiable function $H_n$ that is arbitrarily $C^0$-close to $\widetilde{h}_n$. 
Hence, for each $n$, the level sets of $H_n$ and $\widetilde{h}_n$ are arbitrarily close. 
Observe that we can assume that $H_n$ is a height function (by taking $H_n$ such that its level sets are all spheres or all planes, depending on the domain of the vector field). 
Thus we can choose $H_n$ so that, for every $t\in[0,1]$ and for $\epsilon$ small enough, we have
\[
	|\mu(\phi_{X_2}^{[0,\epsilon]}(H_n^{-1}(t)))-\mu(\phi_{X_2}^{[0,\epsilon]}(\widetilde{h}_n^{-1}(t)))|<\frac{\delta}{4}.
\]
Fix $n$ large enough so that
$\tr {X_1}\mu{h_n}-\Tr{X_1}\mu<\frac{\delta}{4}$, and for such $n$ set $T$ in $[0,1]$ so that 
\[\tr{X_2}\mu{H_n}=\Flux {X_2} \mu {H_n^{-1}(T)}.
\] 
We have two possibilities:
\begin{enumerate}
	\item 
	If $\Flux {X_2} \mu {H_n^{-1}(T)}\geq \Flux {X_2} \mu {\widetilde{h}_n^{-1}(T)}$, then for $\epsilon$ small enough
	\[\mu(\phi_{X_2}^{[0,\epsilon]}(H_n^{-1}(T)))\leq \frac{\delta}{4}+\mu(\phi_{X_2}^{[0,\epsilon]}(\widetilde{h}_n^{-1}(T))).\]
	Hence
	\begin{eqnarray*}
		\Tr{X_2}\mu & \leq & \tr{X_2}\mu{H_n}  
			=  \lim_{\epsilon\to 0}\frac{1}{\epsilon}\mu(\phi_{X_2}^{[0,\epsilon]}(H_n^{-1}(T)))\\
		& \leq & \frac{\delta}{4}+\lim_{\epsilon\to  0}\frac{1}{\epsilon}\mu(\phi_{X_2}^{[0,\epsilon]}(\widetilde{h}_n^{-1}(T)))\\
		& = & \frac{\delta}{4}+\lim_{\epsilon\to 0}\frac{1}{\epsilon}\mu(\phi_{X_1}^{[0,\epsilon]}(h_n^{-1}(T)))\\
		& \leq & \frac{\delta}{4} +\tr{X_1}\mu{h_n} <  \frac{\delta}{2}+\Tr{X_1}\mu.
	\end{eqnarray*}
	Thus, $\delta=\Tr{X_2}\mu-\Tr{X_1}\mu< \frac{\delta}{2}$, 
	a contradiction proving Theorem~\ref{T:invariance} in this case.

	\item If $\Flux {X_2} \mu {H_n^{-1}(T)}< \Flux {X_2} \mu {\widetilde{h}_n^{-1}(T)}$, then
	\begin{eqnarray*}
		\tr{X_2}\mu {H_n} 
		& = & \Flux {X_2} \mu {H_n^{-1}(T)}\\
		& < & \Flux {X_2} \mu {\widetilde{h}_n^{-1}(T)} = \Flux {X_1} \mu {h_n^{-1}(T)}\\
		& \leq & \tr{X_1}\mu {h_n} <  \Tr {X_1} \mu+\frac{\delta}{4}<\Tr {X_2} \mu.
	\end{eqnarray*}
	Thus, $\tr{X_2}\mu {H_n}<\Tr {X_2} \mu$, a contradiction to the definition of the trunkenness. 
\end{enumerate}
This finishes the proof of Theorem~\ref{T:invariance}.
\end{proof}

Computing the trunkenness of a vector field is not easy in general. 
Considering a given height function gives an upper bound on the trunkenness, but lower bounds are harder to find. 
Theorem~\ref{T:Continuity} is a continuity result that provides a useful tool.

	
\begin{proof}[Proof of Theorem~\ref{T:Continuity}]
We begin with the first part of the theorem, namely we prove that if $(X_n, \mu_n)_{n\in\N}$ is a sequence of measure-preserving vector fields such that $(X_n)_{n\in\N}$ converges to~$X$ and $(\mu_n)_{n\in\N}$ converges to~$\mu$ in the weak-$*$ sense, then we have $\lim_{n\to\infty} \Tr{X_n} {\mu_n} = \Tr X \mu.$

Fix $\epsilon>0$. The convergence hypothesis implies that for every
surface~$S$, for every $\delta>0$ small and for $n$ big enough, we have 
\begin{equation}\label{eq:convergence}
|\mu_n(\phi_{X_n}^{[0,\delta]}(S)) -\mu(\phi_{X}^{[0,\delta]}(S))|\leq \epsilon, 
\end{equation}
	where $\phi_{X}^t$ and $\phi_{X_n}^t$ denote the flows of the vector fields $X$ and $X_n$, respectively.
	
	Assume that $\Tr{X_n}{\mu_n}$ does not converge to $\Tr X \mu$, then for any $N\in \mathbb{N}$ there exists $n\geq N$ such that
	$$|\Tr{X_n}{\mu_n}-\Tr X \mu|>3\epsilon.$$
	Fix $n$ big enough satisfying the last inequality and \eqref{eq:convergence}. Then either 
	$$\Tr{X_n}{\mu_n}<\Tr X \mu \qquad \text{or} \qquad  \Tr X \mu<\Tr{X_n}{\mu_n}.$$ 
	Next we analyse these two cases and deduce a contradiction for each.
	
	If $\Tr X \mu<\Tr{X_n}{\mu_n}$, consider a sequence of height functions $h_k$ such that 
	$$\lim_{k\to \infty} \tr{X}\mu {h_k}=\Tr X \mu.$$
Modulo extracting a subsequence we can assume that for all $k$ we have that $0\leq \tr X\mu{h_k}-\Tr X \mu\leq \epsilon$. Observe that this difference is always positive by the definition of the trunkenness. Then
	\begin{eqnarray*}
	3\epsilon & < & \Tr{X_n}{\mu_n}-\Tr X \mu \\
	& = & \Tr{X_n}{\mu_n}-\tr X\mu{h_k}+\tr X\mu{h_k} - \Tr X \mu,
	\end{eqnarray*}
	hence,
	\begin{eqnarray*}
	2\epsilon & < & \Tr{X_n}{\mu_n}-\tr X\mu{h_k} +\tr{X_n}{\mu_n}{h_k} -\tr{X_n}{\mu_n}{h_k}\\
	& < & \Tr{X_n}{\mu_n}-\tr{X_n}{\mu_n}{h_k} +|\tr{X_n}{\mu_n}{h_k}-\tr X\mu{h_k}|\\
	\end{eqnarray*}
	For $k$ fixed, since $\mu_n$ tends to $\mu$ in the weak-$*$ sense, we can assume that the term\linebreak $|\tr{X_n}{\mu_n}{h_k}-\tr X\mu{h_k}|$ is smaller than $\epsilon$ by possibly taking $n$ larger. 
	Then we get 
	$2\epsilon <  \Tr{X_n}{\mu_n}-\tr{X_n}{\mu_n}{h_k}  +\epsilon.$
	We deduce $\epsilon+\tr{X_n}{\mu_n}{h_k}<\Tr{X_n}{\mu_n}$, a contradiction to the definition since we have
	\[\epsilon+\tr{X_n}{\mu_n}{h_k}<\Tr{X_n}{\mu_n}\leq \tr{X_n}{\mu_n}{h_k}.\]
	
	\bigskip
	
	The other case is similar. 
	Assume now $\Tr{X_n}{\mu_n}<\Tr X \mu$ and for each $n$ consider a sequence of height functions $h_{n,k}$ such that 
	\[\lim_{k\to \infty} \tr{X_n}{\mu_n}{h_{n,k}}=\Tr{X_n}{\mu_n}.\] 
	As in the previous case, we assume that for all $k$ we have
        $0\leq \tr{X_n}{\mu_n}{h_{n,k}}-\Tr{X_n}{\mu_n}\leq \epsilon$. 
        Then, for $n$ large enough,
	\begin{eqnarray*}
	3\epsilon & < & \Tr X \mu-\Tr{X_n}{\mu_n} \\
	& = & \Tr X \mu-\tr{X_n}{\mu_n}{h_{n,k}}+\tr{X_n}{\mu_n}{h_{n,k}} - \tr X \mu {h_{n,k}}\\
	& & +\tr X \mu {h_{n,k}}- \Tr{X_n}{\mu_n}\\
	& \leq & \Tr X \mu-\tr{X}\mu{h_{n,k}}+\epsilon + |\tr X \mu {h_{n,k}}-\tr{X_n}{\mu_n}{h_{n,k}}|\\
	& \leq & \Tr X \mu-\tr X \mu{h_{n,k}}	+2\epsilon.
	\end{eqnarray*}
	We conclude that $\epsilon+\tr X \mu {h_{n,k}}<\Tr X \mu$, a contradiction to the definition of $\Tr X \mu$.
	Thus, for $n$ large enough, $|\Tr{X_n}{\mu_n}-\Tr X \mu|<3\epsilon$. 
	This proves the first part of Theorem~\ref{T:Continuity}.
	
	\bigskip

The second part of Theorem~\ref{T:Continuity} gives an asymptotic interpretation to the trunkenness of a vector field. 
	Consider a sequence of knots $(K_n)_{n\in\N}$ that support vector fields~$X_n$ and  denote by~$(t_n)_{n\in\N}$ the respective periods. From Equation \eqref{eq:measureintersection} we get that for each $n$
	$$\sharp\{ K_n\cap h^{-1}(t)\} = \lim_{\epsilon\to 0}\frac{1}{\epsilon} \,\mu_n(\phi_n^{[0,\epsilon]}(h^{-1}(t))),$$
	where $\mu_n$ are the mesures supported by the knots and
        $\phi_n^t$ is the flow of $X_n$.
	 
If we suppose that $X_n$ tends to~$X$ and that the normalized linear Dirac measures~$(\frac 1 {t_n} \mu_n)_{n\in\N}$ converge to~$\mu$ in the weak-$*$ sense, then we can reformulate the previous result into
	\[\lim_{n\to\infty} \frac 1{t_n} \kTr{K_n} = \Tr X \mu.\] 

	Now if $X$ is ergodic with respect to~$\mu$ then, for almost every~$p$ and for every sequence $t_n\to\infty$, the linear Dirac masses concentrated on the knots $K_n := k_X(p,t_n)$ tend to~$\mu$ in the weak-$*$ sense. Recall that $k_X(p,t_n)$ is the knot obtained by following the orbit of $p$ for a time $t_n$ concatenated with a  geodesic from $\phi_X^{t_n}(p)$ to $p$, as explained in the introduction. Then the convergence implies that $\displaystyle{\lim_{t\to\infty} \frac 1 t \kTr{k_X(p,t)}}$ exists and is equal to~$\Tr X \mu$.
	This proves the second part of Theorem~\ref{T:Continuity}.
\end{proof}



\section{Independence of helicity}
\label{S:Seifert}

As mentioned in the introduction, the helicity is a well-known invariant of vector fields up to volume-preserving diffeomorphism. 
In this section, all vector fields are on the sphere~$\Sph^3$ and preserve a volume form, that we denote by $\Omega$. 
For $X$ such a vector field, Cartan's formula implies that $\iota_X\Omega$ is a closed 2-form, and since the ambient manifold is simply connected it is exact. 
We may then write $\iota_X\Omega=d\alpha$, for $\alpha$ some differential 1-form. 
The helicity of $X$ is defined as
\[\Hel(X)=\int_{\mathbb{S}^3} \alpha\wedge d\alpha,\]
and does not depends on the choice of the primitive $\alpha$.

As we recalled in the introduction most known asymptotic invariants are in fact proportional to a power of helicity~\cite{Arnold, GG, Baader, BM}.
The goal of this section it to prove that the trunkenness of a vector field is not a function of its helicity. 
In order to do so we compute the trunkenness and the helicity of a vector field that preserves the invariant tori of a Hopf fibration of $\Sph^3$.


Considering~$\Sph^3$ as the unit sphere~$\{ (z_1,z_2)\in\C^2, \vert z_1\vert^2+\vert z_2\vert^2 = 1\}$, the \emph{Seifert flow of slope~$(\alpha, \beta)$} is the flow $\phi_{\alpha,\beta}$ given by 
	\[\phi^t_{\alpha,\beta}(z_1,z_2) := (z_1 e^{i2\pi \alpha t}, z_2 e^{i2\pi \beta t}).\]
It is generated by the vector field~$X_{\alpha, \beta}$ given by $X_{\alpha, \beta}(z_1, z_2) = (i2\pi\alpha z_1, i2\pi\beta z_2)$. 
This flow preserves the standard volume form, that is, the volume form $\Omega_\Haar$ associated to the Haar measure of $\Sph^3$.
The flow has two distinctive periodic orbits corresponding to~$z_1=0$ and $z_2=0$ that are trivial knots in~$\Sph^3$.
The tori~$\vert z_1/z_2\vert = r$ for $0<r<\infty$ are invariant and the flow on each of them is the linear flow of slope~$\alpha/\beta$. 
If $\alpha/\beta$ is rational, put $\alpha/\beta=p/q$ with $p,q\in \mathbb{N}$ coprime. 
Then every orbit of~$\phi_{\alpha,\beta}$, different from the two trivial ones, is a torus knot of type~$T(p,q)$.

The helicity of~$\phi_{\alpha,\beta}$ is equal to~$\alpha\beta$. To compute it in the  rational case $(\alpha,\beta)=(p,q)$ with $p,q$ coprime, observe that all the orbits except two are periodic of period~$1$. 
The linking number of an arbitrary pair of such orbits is~$pq$. 
Therefore the asymptotic linking number (also called asymptotic Hopf invariant) equals~$pq$ and, by Arnold's Theorem~\cite{Arnold}, so does the helicity. 
For the general case of $(\alpha, \beta)$ not necessarily rational, it is enough to use the continuity of the helicity, since $X_{\alpha, \beta}$ can be approximated by a sequence of Seifert flows with rational slope. 

\begin{proposition}
	\label{P:Tpq}
	The trunkenness of the Seifert flow~$\phi_{\alpha,\beta}$ with respect to the standard volume form $\Omega_\Haar$ is equal to~$2\min(\alpha,\beta)$.
\end{proposition}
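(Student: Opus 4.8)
The plan is to compute the trunkenness of $\phi_{\alpha,\beta}$ by establishing matching upper and lower bounds. Without loss of generality assume $\alpha\le\beta$, so the target value is $2\alpha$. The upper bound should come from exhibiting an explicit good height function, and the lower bound from the asymptotic interpretation provided by Theorem~\ref{T:Continuity}, which lets me transfer the known value $\kTr{T(p,q)}=2\min(p,q)$ of the trunk of torus knots to the vector field.

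\textbf{Upper bound.} First I would produce a height function $h$ witnessing $\tr{X_{\alpha,\beta}}{\Omega_\Haar}{h}=2\alpha$. The natural candidate is a height function whose levels are spheres compatible with the genus-one Heegaard splitting of $\Sph^3$ coming from the Hopf tori $\vert z_1/z_2\vert = r$. Since $\iota_X\Omega_\Haar$ is an explicit $2$-form, I would compute $\int_{h^{-1}(t)}\vert\iota_X\Omega_\Haar\vert$ directly. The geometric flux through a level sphere measures how much the sphere fails to be transverse to the flow; the flow winds around the two core circles with angular speeds $2\pi\alpha$ and $2\pi\beta$, and a level sphere that separates the two cores symmetrically should realize the minimax value. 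I expect the integral of $\vert\iota_X\Omega_\Haar\vert$ over such a sphere to evaluate to $2\alpha$, the factor of two reflecting that the flow crosses each transverse sphere twice (once in each direction), exactly mirroring the $2q$ versus $2p$ bound for $T(p,q)$.

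\textbf{Lower bound.} For the reverse inequality I would invoke the second part of Theorem~\ref{T:Continuity}. In the rational case $(\alpha,\beta)=(p,q)$ with $p,q$ coprime, the flow is periodic with all non-singular orbits being torus knots of type $T(p,q)$, each of period~$1$. Taking a generic orbit $k_X(p_0,t)$ and its normalized linear Dirac measure, these converge weak-$*$ to $\Omega_\Haar$ as $t\to\infty$ (the flow restricted to any invariant torus is an irrational or rational rotation, and the whole family of tori carries the Haar measure). Then Theorem~\ref{T:Continuity} gives $\Tr{X_{p,q}}{\Omega_\Haar}=\lim_{n} \frac{1}{t_n}\kTr{K_n}$, and since each $K_n$ is (a long piece closing up to) a $T(p,q)$ knot with $\kTr{T(p,q)}=2\min(p,q)=2\alpha$, the lower bound $\Tr{X_{p,q}}{\Omega_\Haar}\ge 2\alpha$ should follow once the normalization of periods is tracked carefully. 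For irrational $(\alpha,\beta)$ I would pass to the limit using the first part of Theorem~\ref{T:Continuity} (continuity), approximating $X_{\alpha,\beta}$ by Seifert flows of rational slope, exactly as done for the helicity computation preceding the statement.

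\textbf{Main obstacle.} The delicate point is the lower bound and specifically the correct bookkeeping in the asymptotic argument: one must verify that the long-orbit knots $k_X(p_0,t_n)$ genuinely have trunk asymptotic to $2\min(p,q)\cdot t_n$ rather than something smaller, i.e.\ that closing up the long orbit by a bounded geodesic arc does not create shortcuts that lower the trunk sublinearly in a way that survives the $\frac1{t_n}$ normalization. Equivalently, I must ensure the weak-$*$ convergence of the normalized Dirac measures to $\Omega_\Haar$ is genuine, which requires that a generic orbit equidistributes both along each torus and across the family of tori. The upper-bound integral, by contrast, I expect to be a routine (if slightly fiddly) computation once the right level spheres are chosen; the conceptual risk there is only in confirming that no cleverer height function beats $2\alpha$, but that is precisely what the lower bound rules out, so the two halves are complementary.
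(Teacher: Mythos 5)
Your upper bound is, in outline, the paper's argument: one takes the standard height function after a stereographic projection placing the exceptional orbit $z_1=0$ through the centre of the level spheres, observes that the maximum of $t\mapsto\int_{h_0^{-1}(t)}\vert\iota_{X_{\alpha,\beta}}\Omega_\Haar\vert$ is attained at the middle sphere $S$, which contains the other exceptional orbit $z_2=0$ as an equator, and evaluates the integral as twice the flux through a disc bounded by that equator; a first-return-time computation (return time $1/\beta$, total volume $1$) gives flux $\beta$, hence $\tr{X_{\alpha,\beta}}{\Omega_\Haar}{h_0}=2\beta$, and $2\alpha$ by symmetry. You leave the integral as an expectation rather than computing it, but the route is the same and the computation is routine.

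The lower bound as you propose it does not work, and the failure is precisely the obstacle you flag without resolving. The Seifert flows are \emph{not} ergodic with respect to $\Omega_\Haar$ (the paper says so explicitly in the proof of Theorem~\ref{T:Seifert}): every orbit is confined to a single invariant torus $\vert z_1/z_2\vert=r$, so the normalized linear Dirac measure of a long orbit segment converges to a measure supported on that one torus --- in the rational case $(\alpha,\beta)=(p,q)$ the orbit is periodic of period $1$, so its normalized Dirac measure is just the uniform measure on a single $T(p,q)$ circle. No generic orbit equidistributes across the family of tori, the hypothesis of the second part of Theorem~\ref{T:Continuity} is not met, and your single-orbit argument would in fact yield the useless bound $\lim_n\frac1{t_n}\kTr{K_n}=0$, since the trunk of (a closing-up of) a retraced periodic orbit does not grow with $t_n$. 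The paper's fix is to take, in the rational case, a sequence of $n$-component links $K_n$ consisting of $n$ \emph{distinct} periodic orbits spread over different invariant tori, so that the induced normalized Dirac measures do converge weak-$*$ to $\Omega_\Haar$; such a $K_n$ is an $n$-strand cable of $T(p,q)$, and the essential extra ingredient is Zupan's theorem, which gives $\kTr{K_n}=2n\min(p,q)$ exactly --- linear growth in the total length $n$, which is what survives the normalization and yields $\Tr{X_{p,q}}{\Omega_\Haar}\ge 2\min(p,q)$. Your reduction of the irrational case to the rational one via the continuity statement of Theorem~\ref{T:Continuity} is correct and is what the paper does; it is the rational case itself that requires the multi-orbit cable-link argument rather than Ozawa's value of $\kTr{T(p,q)}$ applied to one long orbit.
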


\begin{proof}
Let us first prove $\Tr{X_{\alpha,\beta}} {\Omega_\Haar}\le 2\beta$. 
For this it is enough to exhibit a height function~$h$ that yields $\tr{X_{\alpha,\beta}}{\Omega_\Haar} h = 2\beta$. 
First define $\infty=(0,1)$ and $0=(0,-1)$ in $\Sph^3\subset \mathbb{C}^2$ and take the stereographic projection to identify
$$\{(z_1,z_2) \in \mathbb{C}^2\, ,\, |z_1|^2+|z_2|^2=1\} \simeq \mathbb{R}^3\cup\{\infty\}.$$
Take now as $h$ the standard height function $h_0$ of
$\mathbb{R}^3\cup\{\infty\}$. The spheres are centered at $0\in
\mathbb{R}^3$ that corresponds to the point $(0,-1)\in \Sph^3\subset
\mathbb{C}^2$, hence the orbit $z_1=0$ intersects twice each level
sphere~$h_0^{-1}(t)$. The middle sphere, $S=h_0^{-1}(1/2)$, contains
the other special orbit $z_2=0$ and is the only sphere that intersects
all the orbits of~$\phi_{\alpha,\beta}$. Then  the function~$t\mapsto\int_{h_0^{-1}(t)}\vert\iota_{X_{\alpha,\beta}}\Omega_\Haar\vert$ has a maximum for~$t=1/2$. 

For computing $\int_{S}\vert\iota_{X_{\alpha,\beta}}\Omega_\Haar\vert$, we remark that the 2-sphere~$S$ has the orbit $(e^{i2\pi\alpha t},0)$ as an equator, that the flow is positively transverse to the northern hemisphere and negatively transverse to the southern hemisphere. 
Then the integral $\int_S\vert\iota_{X_{\alpha,\beta}}\Omega_\Haar\vert$ is equal to twice the flux of~$X_{\alpha,\beta}$ through any disc bounded by the curve~$(e^{i2\pi \alpha t},0)$. 
Consider the flat disc~$D$ in~$\Sph^3$ bounded by~$(e^{i2\pi \alpha t},0)$. 
The first return time to~$D$ is constant and equal to~$1/\beta$, so the flux multiplied by~$1/\beta$ gives the total volume of~$\Sph^3$, that is 1. 
Therefore $\Flux{X_{\alpha,\beta}} {\Omega_\Haar} D$ is equal to~$\beta$, and we obtain $\tr{X_{\alpha,\beta}}{\Omega_\Haar} {h_0} = \int_S\vert\iota_{X_{\alpha,\beta}}\Omega_\Haar\vert = 2\beta$. 
By symmetry, we then have~$\Tr{X_{\alpha,\beta}} {\Omega_\Haar} \le 2\min(\alpha,\beta)$.

\bigskip

For proving the converse inequality~$\Tr{X_{\alpha,\beta}} {\Omega_\Haar} \ge 2\min(\alpha, \beta)$, we approximate~$X_{\alpha, \beta}$ by a sequence $(X_{{p_n}/{r_n}, {q_n}/{r_n}})_{n\in\N}$, where $p_n, q_n, r_n$ are integer numbers. 
Theorem~\ref{T:Continuity} yields 
$$\Tr{X_{\alpha, \beta}} {\Omega_\Haar} = \lim_{n\to\infty} \Tr{X_{{p_n}/{r_n}, {q_n}/{r_n}}} {\Omega_\Haar}.$$ 
As the trunkenness is an order-1 invariant (it is multiplied by $\lambda$ if the vector field is multiplied by~$\lambda$), we only have to prove $\Tr{X_{{p}, {q}}} {\Omega_\Haar} = 2\min(p, q)$ for $p,q$ two coprime natural numbers.

Since every orbit of~$X_{p,q}$ is periodic, we can consider a sequence of $(K_n)_{n\in\N}$ of collections of periodic orbits whose induced linear Dirac measures converge to~$\Omega_\Haar$. We take $K_n$ to be an $n$-component link all of whose components are torus knots~$T(p,q)$. 
Actually $K_n$ is a cabling with $n$ strands on~$T(p,q)$, so by Zupan's theorem~\cite{Zupan}, the trunk of $K_n$ is $2n\min(p,q)$. Since the period of each component of~$K_n$  is 1, the total length of~$K_n$ is $n$, and we get $\Tr{X_{p,q}} {\Omega_\Haar} \ge  2\min(p,q)$.
\end{proof}


\begin{proof}[Proof of Theorem~\ref{T:Seifert}]
The previous computations show that for a Seifert flow~$X_{\alpha, \beta}$ on $\Sph^3$ we have $\Hel(X_{\alpha, \beta}, \Omega_\Haar) = \alpha\beta$ and $\Tr{X_{\alpha, \beta}} {\Omega_\Haar}=2\min(\alpha, \beta)$. 
There is no real function $g$ such that $\min(\alpha,\beta)=g(\alpha\beta)$, so there is no function $g$ such that $\Tr{X_{\alpha, \beta}} {\Omega_\Haar} = g(\Hel(X_{\alpha, \beta}, \Omega_\Haar))$.

However the Seifert flows are not ergodic with respect to~$\Omega_\Haar$. 
Indeed, the foliation of~$\Sph^3$ by invariant tori is invariant, so that it is easy to construct an invariant set with arbitrary mesure. 
Still, a theorem of Katok~\cite{Katok} states that every volume-preserving vector field can be perturbed (in the $C^1$-topology) into an ergodic one. 
Starting from $X_{1,8}$ and $X_{2,4}$, we obtain two ergodic volume-preserving vector fields $X'_{1,8}$ and $X'_{2,4}$. 
By continuity, their trunknesses are close to $2$ and $4$ respectively, while their helicities are close to $8$. 
At the expense of multiplying the $X'_{1,8}$ and $X'_{2,4}$ by a constant, we can assume that their helicities are exactly $8$. 
However their trunknesses are still close to $2$ and $4$, hence different.
\end{proof}

The formula~$\Tr{X_{\alpha,\beta}} {\Omega_\Haar} = 2\min(\alpha,\beta)$ is also interesting to compare with Kudryavtseva's and Encisco-Peralata-Salas-Torres de Lizaur's theorems: the function $(\alpha, \beta)\mapsto2\min(\alpha, \beta)$ is continuous but not differentiable, so that trunkenness is a continuous vector field invariant, but it is not \emph{integral regular} in the sense of~\cite{Kud, EPT}.

\section{Trunkenness and the existence of periodic orbits} \label{sec-periodic}

In this section we adress the question of what happens when the infimum in the definition of the trunkenness is a minimum, for  non-singular vector fields on $\Sph^3$ with an invariant mesure $\mu$. 
We deduce that the vector field must posses an unknotted periodic orbit by finding a periodic orbit tangent to a level of the function. 
The proof of Theorem~\ref{T:minperiodic} in particular implies that there is height function $h$ such that 
\[\tr {X} \mu h=\Flux X \mu {h^{-1}(t_{\max})}\] 
for some (not necessarily unique) $t_{\max} \in [0,1]$ and such that all the connected components of $S=h^{-1}(t_{\max})$ along which $X$ is tangent to $S$ are bounded by periodic orbits of $X$. 
The existence of vector fields on~$\Sph^3$ without periodic orbits \cite{Kup94} implies that there are vector fields for which we cannot consider a minimum to define the trunkenness.

\begin{proof}[Proof of Theorem~\ref{T:minperiodic}]
For $f$ a height function we define
\begin{eqnarray*}
	F_f:[0,1] & \to & \mathbb{R}\\
	t & \mapsto &  \Flux X \mu {f^{-1}(t)}
\end{eqnarray*}
Set $h$ the height function such that $\Tr X \mu=\tr {X} \mu h$ and let $t_{\max}\in [0,1]$ be a maximum of $F_h$, hence
\[\Tr X \mu=\tr {X} \mu h=\lim_{\epsilon\to 0} \frac{1}{\epsilon}\,\mu(\phi_X^{[0,\epsilon]}(h^{-1}(t_{\max}))).\]
Denote by  $S$ the level set $h^{-1}(t_{\max})$. Observe that the value $t_{\max}$ is not necessarily unique.

We assume that $\Sph^3$ and  $S$ are oriented and thus we can distinguish three subsets of $S$ (see Figure~\ref{F:TT}):
\begin{itemize}
\item $S^t$ the closed set along wich $X$ is tangent to $S$;
\item $S^+$ the open set along wich $X$ is positively transverse to $S$;
\item $S^-$ the open set along wich $X$ is negatively transverse to $S$.
\end{itemize}
We claim that none of these sets is empty. 
First assume that $S^+$ is empty, then $S=S^t\cup S^-$. 
Observe that $\Sph^3\setminus S$ has two connected components that are diffeomorphic to open 3-dimensional balls. 
Hence there is one of these connected components, denote its closure by $D$, that is invariant under the diffeomorphism of $\Sph^3$ defined by $\phi_X^t$ for any  $t>0$, and since $X$ is non-singular there exists $t_0>0$ such that $\Phi:=\phi_X^{t_0}$ has no fixed points. 
Thus $\Phi$ maps $D$ to~$D$, and $D$ is the closed 3-dimensional disc, hence by Brouwer fixed point theorem $\Phi$ has a fixed point, a contradiction. 
Thus $S^+$ is non-empty, and the same argument proves that $S^-$ is non-empty. 
Now $S^+$ and $S^-$ are open subsets of~$S$ and have empty intersection, hence $S^t=S\setminus(S^+\cup S^-)$ is not empty.

\begin{figure}
	\begin{picture}(60,60)(0,0)
	\put(0,0){\includegraphics[width=.4\textwidth]{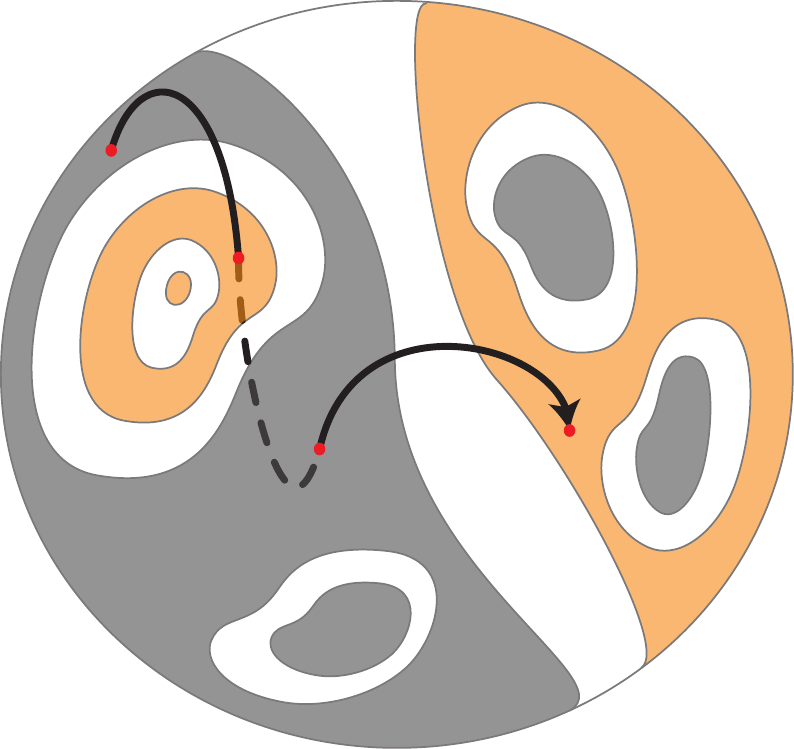}}
	\put(11,12){$S^-$}
	\put(50,40){$S^+$}
	\put(25,50){$S^t$}
	\end{picture}
	\caption{\small The decomposition of the level~$S$ of the height function into $S^+$ where~$X$ is positively transverse to~$S$ (orange), $S^-$ where $X$ is negatively transverse (black), and $S^t$ where $X$ is tangent to~$T$ (white). A piece of orbit of~$X$ that intersects~$S$ four times is also shown.}
	\label{F:TT}
\end{figure}

Decompose $B:=\partial S^t$ as $B=B^+\cup B^-$, where $B^+$ and $B^-$ are in the boundary of $S^+$ and $S^-$ respectively. Observe that there is at least one connected component of $S^t$ whose boundary has both positive and negative parts.

Assume that $X$ has no periodic orbits, then $B$ is made of circles and each one of these circles is tranverse to $X$ in at least one point. 

\begin{lemma}\label{L:poincarebendixson}
	If a point $p$ in $S^t$ is such that either its positive or negative orbit is contained in $S$, then $X$ has an unknotted periodic orbit.
\end{lemma}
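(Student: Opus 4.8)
The plan is to confine the dynamics to the sphere $S$ and then invoke the Poincaré--Bendixson theorem. Assume without loss of generality that the positive semi-orbit $\{\phi_X^t(p):t\ge 0\}$ is contained in $S$; the case of the negative semi-orbit is symmetric, replacing the $\omega$-limit set by the $\alpha$-limit set below. The first observation I would make is that this semi-orbit in fact lies entirely in $S^t$. Indeed, at each of its points the velocity $X$ is tangent to the orbit, hence tangent to $S$, so $X$ is tangent to $S$ all along the curve, which is precisely to say that the orbit stays in the tangency locus $S^t$ (a closed set).

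To run a genuinely two-dimensional argument I would replace $X$ by its orthogonal projection $X^S$ onto $\mathrm{T}S$, computed with respect to some auxiliary Riemannian metric; this is a smooth vector field on the $2$-sphere $S$. By construction $X^S$ coincides with $X$ exactly on $S^t$, while a zero of $X^S$ occurs precisely where $X$ is normal to $S$. Such a point is transverse to $S$ and therefore belongs to $S^+\cup S^-$, which is disjoint from $S^t$. Since the positive orbit of $p$ under $X$ stays in $S^t$ and $X^S=X$ there, this orbit is also a forward orbit of the flow of $X^S$ on $S$, and it never meets a zero of $X^S$.

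Next I would examine the $\omega$-limit set $\omega(p)$ of this orbit. It is non-empty by compactness of $S$, and it is compact, connected, and invariant; moreover it is contained in $\overline{\{\phi_X^t(p)\}}\subseteq S^t$, so it contains no zero of $X^S$ (the zeros all lie outside $S^t$, and $X$ itself is non-singular). The Poincaré--Bendixson theorem applied to the flow of $X^S$ on the $2$-sphere $S$ then forces $\omega(p)$ to be a single periodic orbit $\gamma$ of $X^S$. As $\gamma\subseteq S^t$ and $X^S=X$ on $S^t$, the curve $\gamma$ is a periodic orbit of $X$ itself, and, being the orbit of a non-singular flow, it is an embedded simple closed curve.

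Finally, to see that $\gamma$ is unknotted, I would use that $\gamma$ is a simple closed curve lying on the embedded $2$-sphere $S\subset\Sph^3$. By the Jordan curve theorem it separates $S$ into two regions and bounds one of the resulting embedded disks, giving an embedded disk in $\Sph^3$ with boundary $\gamma$; hence $\gamma$ is the unknot. The one step that requires genuine care is the passage from the ambient field $X$ to a surface flow on $S$: one must ensure that projecting $X$ onto $\mathrm{T}S$ does not create spurious singularities along the orbit under study, and this is exactly what is guaranteed by the fact that the zeros of $X^S$ sit in $S^+\cup S^-$ whereas the confined orbit, and a fortiori its limit set, live in $S^t$.
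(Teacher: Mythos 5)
Your proof is correct and follows essentially the same route as the paper's: confine the forward orbit to $S$, apply Poincar\'e--Bendixson to conclude the $\omega$-limit set is a periodic orbit, and observe that a simple closed curve on an embedded $2$-sphere bounds a disk and is therefore unknotted. The only difference is that you make explicit a step the paper leaves implicit --- namely, realizing the confined orbit as an orbit of a genuine surface flow (via the tangential projection $X^S$, whose zeros lie in $S^+\cup S^-$ and hence away from the orbit closure in $S^t$) before invoking Poincar\'e--Bendixson --- which is a welcome clarification rather than a change of method.
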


\begin{proof}
	Assume without loss of generality that the positive orbit of $p$ is contained in $S$, then it limits to some invariant set of $X$ contained in $S$. 
	By Poincar\'e-Bendixson Theorem such a set has to be a periodic orbit of $X$. 
	Since the periodic orbit is contained in the sphere $S$ it has to be unknotted.
\end{proof}

Then under the assumption that $X$ has no unknotted periodic orbits, we have that for every $p\in S^t$ the positive and negative orbit of $p$ have to leave $S$ at some point.
Consider now two points $p,q\in B$ that are in the same orbit and such that the orbit segment connecting them is contained in $S^t$. 
We can thus assume that there exists $\tau\geq 0$ such that
$\phi_X^\tau(p)=q$ and $\phi_X^s(p)\in S^t$ for all $0\leq s\leq \tau$. 
Observe that by allowing $\tau$ to be zero, we consider the case $p=q$.

Take $\epsilon>0$ and consider the orbit segment
\[\mathcal{O}=\{\phi_X^s(p) \mid -\epsilon<s<\tau+\epsilon\}.\]
The flowbox Theorem implies that there is a neighborhood $U$ of $\mathcal{O}$ and a diffeomorphism 
\[\varphi:\mathbb{B}(1)\times (-\epsilon,\tau+\epsilon)\to U\]
so that the flowlines are the image under $\varphi$ of the vertical segments $\{\cdot\}\times (-\epsilon,\tau+\epsilon)$. 
Here $\mathbb{B}(1)$ denotes the 2-dimensional open disc of radius 1.

\begin{proposition}\label{P:Bplusminus}
	There exists $p,q\in B$ as above such that $p\in B^-$ and $q\in B^+$ or $p\in B^+$ and $q\in B^-$.
\end{proposition}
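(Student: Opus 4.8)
The plan is to argue by contradiction: assuming that no orbit arc contained in $S^t$ joins a point of $B^-$ to a point of $B^+$, I would produce a height function with strictly smaller maximal flux, contradicting the hypothesis that $h$ realises $\Tr X \mu$.

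First I would set up a local dictionary between the \emph{type} of a boundary point and the \emph{side} of $S$ occupied by the orbit there. Writing $N$ for the positive unit normal of $S$ and $c=\langle X,N\rangle$, one has $S^+=\{c>0\}$, $S^-=\{c<0\}$ and $S^t=\{c=0\}$. For $p\in B$ lying on an arc contained in $S^t$, a second-order computation of the normal coordinate $z(s)$ of $\phi_X^s(p)$ gives $\ddot z(0)=(Xc)(p)$, whose sign is forced by whether the transverse region adjacent to $p$ is $S^+$ or $S^-$ (e.g.\ at a $B^+$ entry point $X$ points from $\{c>0\}$ into $\{c=0\}$, so $(Xc)(p)\le 0$ and the orbit arrives from the side $\{h<t_{\max}\}$). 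This yields: at a $B^+$ point the orbit occupies the side $\{h<t_{\max}\}$ when entering and the side $\{h>t_{\max}\}$ when exiting, the two statements being reversed for $B^-$. Consequently an arc of $S^t$ whose endpoints have the \emph{same} type crosses $S$ from one side to the other, while an arc with endpoints of \emph{opposite} types returns to the side from which it came. Thus the proposition is equivalent to the existence of such a ``returning'' arc.

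Under the contradiction hypothesis every maximal orbit arc in $S^t$ is a genuine crossing — recall that, by the no-periodic-orbit assumption and Lemma~\ref{L:poincarebendixson}, each such arc runs from an entry point to an exit point of $B$. Hence the crossing direction is constant along each arc, the flow truly penetrates the tangency locus, and on the component $\Sigma$ of $S^t$ whose boundary carries both $B^+$ and $B^-$ arcs this partitions $\Sigma$ into flow tubes crossing $S$ upward and flow tubes crossing $S$ downward. I would then use the flowbox coordinates already introduced to comb $S$ along the flow across $\Sigma$; since no orbit is trapped in $S$ this combing terminates on $B$, and it replaces the degenerate two-dimensional tangency by a configuration meeting each orbit in no more contact points, with strictly fewer at the level $t_{\max}$. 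Reassembling this into a genuine height function $h'$ (keeping all levels spherical) would give $\tr X \mu {h'}<\tr X \mu h=\Tr X \mu$, the desired contradiction; hence a returning arc, that is a pair $p\in B^-$, $q\in B^+$ (or the reverse), must exist.

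The main obstacle is exactly this last reduction. Maximality of $F_h$ at $t_{\max}$ alone does \emph{not} force a returning arc: if every arc were a crossing, $F_h$ would be smooth and only weakly maximal at $t_{\max}$, which is perfectly consistent, so the argument must genuinely invoke the minimality of $h$ among all height functions. Turning the local combing into a global height function while controlling the maximum of $t\mapsto F_{h'}(t)$ is delicate — one must guarantee a \emph{strict} decrease even when $t_{\max}$ is non-unique and $F_h$ is weakly maximal, and one must handle the interface (separatrix) orbits between the upward and downward tubes, where the first-hit map degenerates. This is precisely a thin-position width-reduction move, and the quantitative first-variation bookkeeping of the flux across the combed region, together with the use of the no-periodic-orbit hypothesis to ensure the combing closes up, is where the real work lies.
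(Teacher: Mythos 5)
Your local analysis of the tangency arcs is essentially the correct one: the dichotomy between ``crossing'' arcs (same-type endpoints, where in flowbox coordinates $S$ looks like a smoothed vertical step and nearby orbits still meet it once, transversally and with a fixed sign) and ``returning'' arcs (opposite-type endpoints, where $S$ caps back and nearby orbits meet it twice with opposite signs) is exactly the dichotomy the paper exploits, and the flowbox/projection bookkeeping you set up is the paper's tool. The gap is in the mechanism of contradiction. You aim to produce a height function with \emph{strictly} smaller maximal flux, contradicting the assumption that $h$ attains $\Tr X \mu$ --- but under your contradiction hypothesis every maximal arc of $S^t$ is a genuine crossing, so every nearby orbit must still cross $S$ once to pass from one side to the other, and combing the tangency away preserves the flux exactly: there is nothing to decrease. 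The strict decrease exists only in the presence of a returning arc, i.e.\ precisely the configuration whose existence the Proposition asserts; this is how the Proposition is \emph{used} afterwards in the proof of Theorem~\ref{T:minperiodic}. You half-acknowledge this in your final paragraph, but the ``quantitative first-variation bookkeeping'' you defer to is not a technical difficulty to be overcome --- it is the wrong target, and no amount of care will make the maximum drop in the all-crossings case.

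The paper's contradiction is topological rather than variational. Assuming every arc has same-type endpoints, one combs each such arc (finitely many suffice, by compactness of $S^t$ and $B$) into a transverse strip, obtaining a height function $h_1$ with $\tr{X}{\mu}{h_1}=\tr{X}{\mu}{h}$ --- equality, not a strict inequality --- whose extremal level $S_1$ has tangency locus $S_1^t$ consisting of discs. A disjoint union of closed discs does not disconnect the $2$-sphere, so $S_1\setminus S_1^t=S_1^+\cup S_1^-$ would be connected; but $S_1^+$ and $S_1^-$ are disjoint, nonempty (by the Brouwer fixed point argument earlier in the proof), open subsets. That is the contradiction, and it needs neither the minimality of $h$ among height functions nor any estimate on how $F_{h_1}$ varies near $t_{\max}$. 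If you replace your strict-decrease goal by this connectivity obstruction, the rest of your setup goes through.
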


The idea of the proof is that if for every pair of points $p,q$ as above that  are both in $B^+$ or $B^-$, we can change the function $h$ for another height function $h_1$ such that
\[\tr {X} \mu h=\tr X \mu {h_1}\]
and such that the level of $h_1$ realizing the trunkenness has no tangent part that separates $S^+$ from $S^-$, which is impossible.

\begin{proof}
Consider a pair of points $p,q\in B^+$ as above (see Figure~\ref{F:PP}). Let $V_0$ be the neighborhood of $\mathcal{O}$ defined as
$\varphi(\mathbb{D}(1/3)\times(-\epsilon,\tau+\epsilon))$, where  $\mathbb{D}(1/3)$ is the 2-dimensional closed  disc of radius $1/3$ and $V=\varphi(\mathbb{B}(2/3) \times(-\epsilon,\tau+\epsilon))$. Hence $V_0\subset V\subset U$. We will deform the levels of $h$ intersecting $V$ without changing the trunkenness.

\begin{figure}
	\begin{picture}(80,80)(0,0)
	\put(0,0){\includegraphics[width=.5\textwidth]{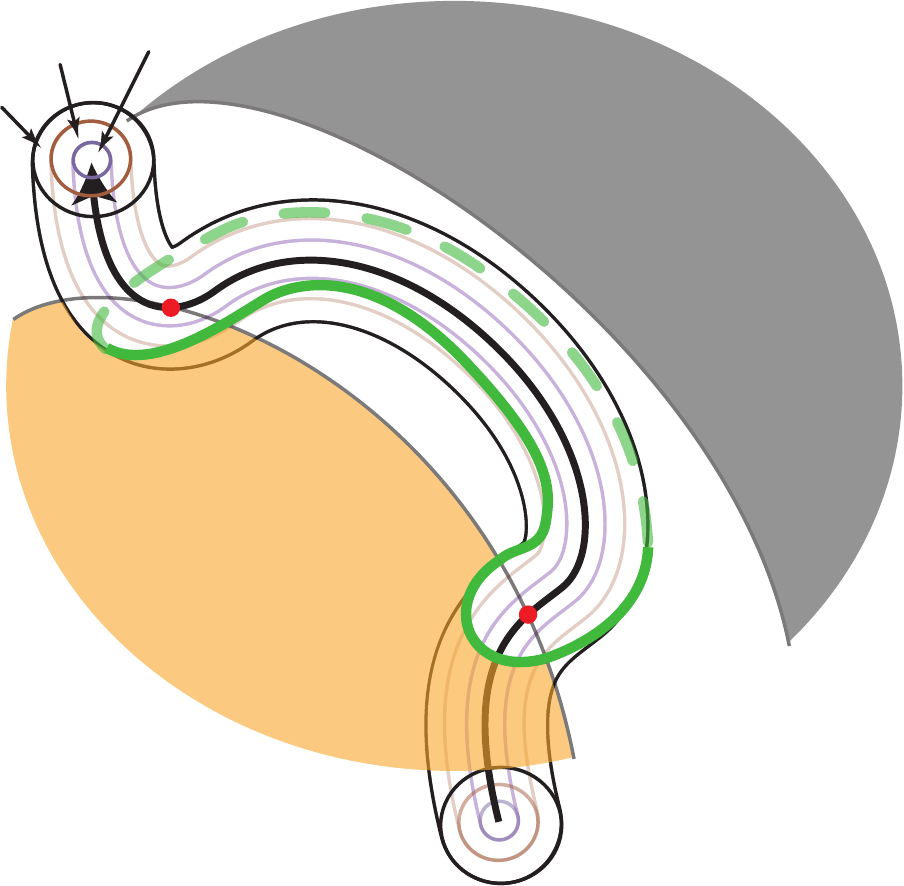}}
	\put(-3,65){$U$}
	\put(3,68){$V$}
	\put(11,70){$V_0$}
	\put(13,28){$S^+$}
	\put(34,38){$S^t$}
	\put(63,48){$S^-$}
	\end{picture}
	\caption{\small A neighborhood~$U$ of a piece of orbit~$\mathcal O$ of~$X$ that connects two points~$p, q$ in~$B^+$. The intersection of~$\partial U$ with $S$ is bold and green.}
	\label{F:PP}
\end{figure}

The image of $S$ by $\varphi^{-1}$ defines a surface $\Sigma$ that is
positively transverse to the vertical direction in
$\mathbb{B}(1)\times(-\epsilon,0)$ and
$\mathbb{B}(1)\times(\tau,\tau+\epsilon)$, as in Figure~\ref{F:PP2}
left. If we project $\Sigma$ to $\mathbb{B}(1)\times\{0\}$, the flux
through $\Sigma$ and through the image of $\Sigma$ under the
projection are the same. Thus if we change $\Sigma$ (or any surface) for another surface whose projection, counted with multiplicities and signs, is the same as the one of $\Sigma$, the flux remains constant. 

 Inside $\mathbb{D}(1/3)\times(-\epsilon,\tau+\epsilon)$ we change the images under $\varphi^{-1}$ of the levels of $h$ to obtain a family of surfaces that  are always positively transverse to $X$ and are $C^\infty$-close to the original ones, in such a way that  the projection to $\mathbb{B}(1)\times\{0\}$ is preserved (as in Figure~\ref{F:PP2} right). Using the neighborhood $V$, we can paste the deformed surfaces of $V_0$ with the original surfaces in $U\setminus V$. 
 
\begin{figure}
	\includegraphics[width=.5\textwidth]{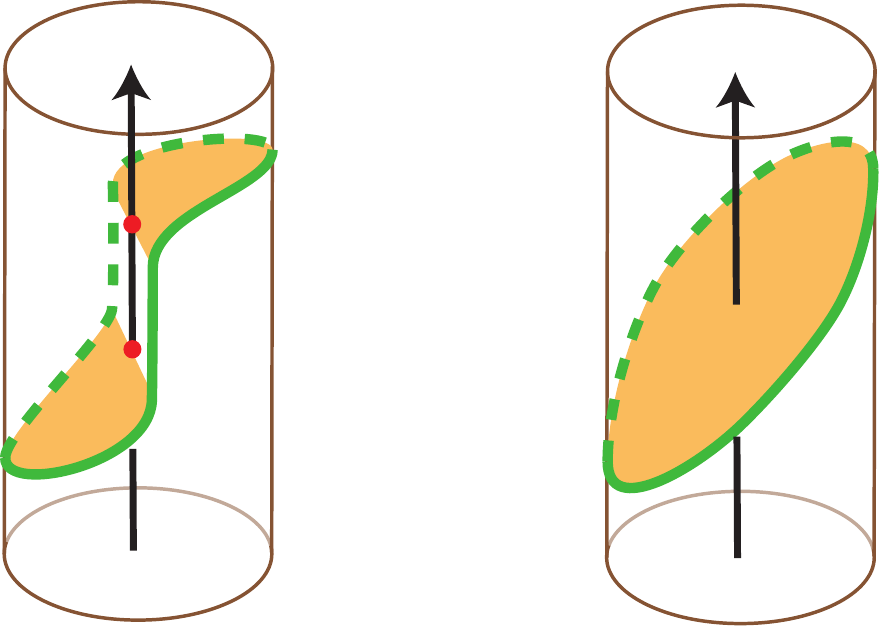}
	\caption{\small Modifying the levels of~$h$ in~$U$ around the arc~$pq$ in the case $p, q\in B^+$.}
	\label{F:PP2}
\end{figure}

 Using $\varphi$ this deformation can be pushed forward to the
 manifold so that the level surfaces of $h$ that intersected $V$ are
 modified and $X$ is positively transverse to the surfaces inside
 $V_0$. Let $\widetilde{S}$ be the surface obtained from $S$ after the
 deformation and $\widetilde{h}$ be a height function whose level sets correspond to the deformed surfaces. Then
\[\tr {X} \mu h=\tr X \mu {\widetilde{h}},\]
since $\Flux X \mu {\widetilde{S}} = \Flux X \mu S$, and for every
surface intersection $V$ the corresponding equation holds.
Clearly, the same proof works if $p,q\in B^-$.

Observe that the level $\widetilde{S}$ of $\widetilde{h}$ realizing the trunkenness coincides with $S$ outside $V$. Moreover, the tangent part $\widetilde{S}^t$ of $\widetilde{S}$ is strictly smaller than $S^t$: the connected component $A$ of $S^t$ containing $p$ and $q$ got  transformed into $A\setminus (V_0\cap A)$. In other words there is now a strip accross $A$.

\bigskip

Assume now that for any couple of points $p,q\in B$ that is joined by an orbit segment contained in $S^t$ we have either $p,q\in B^+$ or $p,q\in B^-$.  The sets $\widetilde{S}^t$ and $B$ are compact, we can thus find a finite number of pairs of points $p,q\in \partial  B$ such that after deforming the levels $h$ as above along each of the corresponding orbit segments, we obtain a height function $h_1$ such that 
\[\tr {X} \mu h=\tr X \mu {h_1}=\Flux X \mu {S_1},\]
where $S_1$ is the level of $h_1$ realizing the trunkenness and such that $S_1^t$ is formed by discs. Hence none of the connected components of $S_1^t$ separates the positively tranverse set $S_1^+$ from the negatively transverse set $S_1^-$, a contradiction.
\end{proof}

Consider now a pair of points $p,q$ as in Proposition~\ref{P:Bplusminus} and assume that $p\in B^-$ and $q\in B^+$. 
We now construct a height function $h_2$ such that 
\[\tr {X} \mu h>\tr X \mu {h_2},\]
a contradiction that implies that $X$ has unknotted periodic orbits.

\begin{figure}
	\includegraphics[width=.5\textwidth]{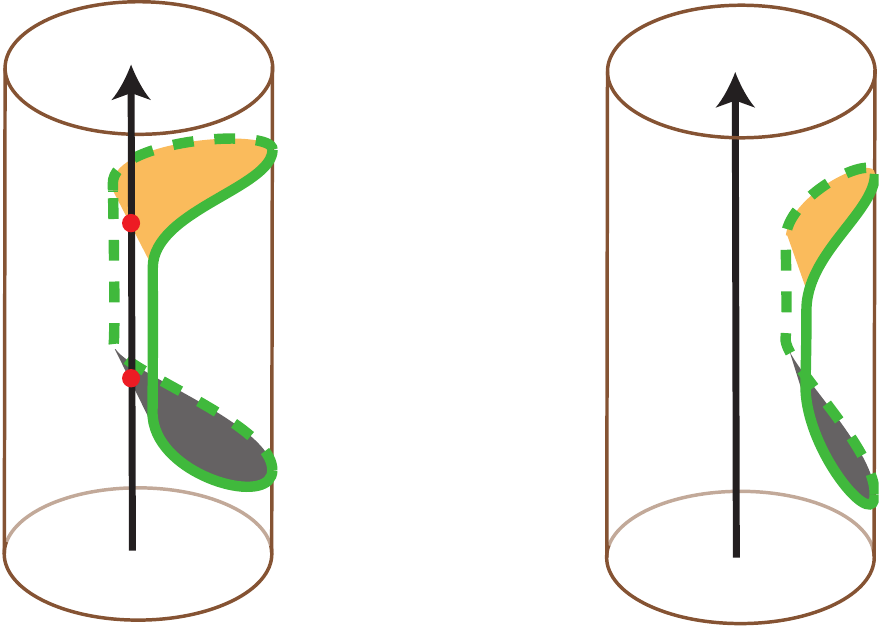}
	\caption{\small Modifying the levels of~$h$ in ~$U$ around the arc~$pq$ in the case $p\in B^+$, $q\in B^-$.}
	\label{F:PM}
\end{figure}

Consider the function $F_h$ and let $M_h$ be the set of $t\in [0,1]$
realizing the trunkenness, that is $M_h=\{t\in[0,1]\mid
F_h(t)=\tr {X} \mu h\}$. Observe that $M_h$ is closed. We distinguish two cases:
\begin{enumerate}
\item all the elements of $M_h$ are isolated;
\item there exist a closed interval $I_{\max}$ contained in $M_h$.
\end{enumerate}

We start with the first case. Take $t_{\max}\in M_h$ and set
$S=h^{-1}(t_{\max})$. To obtain $h_2$ we will deform the levels near
$S$, if $M_h$  has more than one element the following deformation has to be done near each correponding level set.  Thus we assume without loss of generality that $t_{\max}$ is the only element of $M_h$.

Let $\mathcal{O}$, $\varphi$, $U$, $V$ and $V_0$ be defined as above. 
Let $[a,b]\subset [0,1]$ be such that $t_{\max}\in [a,b]$ and if $h^{-1}(t)\cap V_0\neq \emptyset$ then $t\in [a,b]$. 
Shrinking $V$ if necessary, we can assume that $\varphi(\mathbb{B}(2/3)\times (-\epsilon, \tau+\epsilon)$ is contained in $h^{-1}([a,b])$ and that for any $t\in [0,1]\setminus [a,b]$ we have
\[
	\Flux X \mu {h^{-1}(t)}< \tr {X} \mu h-\delta,
\]
for some $\delta>0$. We assume also that $\varphi(\mathbb{B}(2/3)\times [0,\tau])$ is contained in $h^{-1}([a,b])$.

Consider now the surface $\Sigma$ obtained as the image under $\varphi^{-1}$ of $S$ in $U$. Then either $\Sigma$  is positively transverse to the vertical direction in $\mathbb{B}(1)\times(-\epsilon,0)$ and negatively transverse to the vertical direction in $\mathbb{B}(1)\times(\tau,\tau+\epsilon)$, or it is negatively transverse to the vertical direction in $\mathbb{B}(1)\times(-\epsilon,0)$ and positively transverse to the vertical direction in $\mathbb{B}(1)\times(\tau,\tau+\epsilon)$. We assume without loss of generality that $\Sigma$  is negatively transverse to the vertical direction in $\mathbb{B}(1)\times(-\epsilon,0)$ and positively transverse to the vertical direction in $\mathbb{B}(1)\times(\tau,\tau+\epsilon)$, as represented in Figure~\ref{F:PM}.

We want to deform the levels of $h$ intersecting $V_0$ in such a way that their trace under the projection to $\mathbb{B}(1)\times\{0\}$ is reduced. 

Shrinking $V$ if necessary, we can assume that the level sets are:
\begin{itemize}
\item tangent or negatively transverse to $X$ in $\varphi(\mathbb{B}(2/3)\times (-\epsilon,0))$,
\item tangent or positively transverse to $X$ in $\varphi(\mathbb{B}(2/3)\times (\tau, \tau +\epsilon))$.
\end{itemize}
Consider the circle $C=\Sigma \cap \partial(\mathbb{D}(1/3)\times(-\epsilon,\tau+\epsilon))$. 
Since $\partial(\mathbb{D}(1/3)\times(-\epsilon,\tau+\epsilon))$ is topologicaly a sphere, $C$ divides it into two discs. 
Let $W$ be the disc that is entirely tangent to the vertical direction. 
There is a continuous deformation from $\Sigma$ to the continuous surface $\widetilde{\Sigma}$ that coincides with $\Sigma$ outside $\mathbb{D}(1/3)\times (-\epsilon,\tau+\epsilon)$ and coincides with $W$ in this set. 
Observe that the projection of $\widetilde{\Sigma}$ to $\mathbb{B}(1)\times\{0\}$ is a proper subset of the projection of $\Sigma$. 
We can now approximate $\widetilde{\Sigma}$ in $\mathbb{B}(2/3)\times (-\epsilon,\tau+\epsilon)$ with a smooth surface $\Sigma_1$, that can be obtained from $\Sigma$ by a continuous deformation that is the identity near the boundary of $\mathbb{B}(2/3)\times (-\epsilon,\tau+\epsilon)$.

Apply this deformation to nearby levels by pushing (to the right as in Figure~\ref{F:PM}) all the level sets intersecting $\mathbb{B}(1/3)\times (0,\tau)$ with a continuous map that is the identity near the boundary of $\mathbb{B}(2/3)\times (-\epsilon,\tau+\epsilon)$. 
By construction the projection to $\mathbb{B}(1)\times \{0\}$ of the image under $\varphi^{-1}$ of any level set intersecting $V$ is a subset of the original one and for levels sufficiently near $h^{-1}(t_{\max})$ it is a proper subset. 
Thus the flux of $X$ gets reduced on any of the deformed surfaces.

As before, we use $\varphi$ to push forward the deformation to $\Sph^3$ and obtain the levels of a new height function $h_2$ such that $\tr X \mu  {h_2}<\tr {X} \mu h$. 

\bigskip

The proof of the case where $M_h$ has only isolated elements ends here and we are left with the case where the set $M_h$ contains an interval $I_{\max}=[a,b]$. 
Take $t_{\max}=a$ and use the previous procedure to obtain a new height function $h_2$ such that  the interval $I_{\max}$ is reduced to $I_2=[a_2,b]$ for some $a_2>a$. 
This process can be applied as long as the boundary of the tangent part of $h^{-1}(t)$ for any $t\in[a,b]$ is not composed by periodic orbits of $X$. 
Recursively either we find an unknotted periodic orbit or we obtain a height function $f$ such that $\tr X \mu f <\tr {X} \mu h$ as needed.
\end{proof}

The proof of Theorem~\ref{T:minperiodic} gives the following result.

\begin{coro}
Let $X$ be a non-singular vector field on $\Sph^3$, $\mu$ an invariant measure and $h$ a height function  such that 
$$\Tr X \mu=\tr {X} \mu h.$$
Then there is a height function $h_1$ such that:
\begin{itemize}
\item $\tr {X} \mu h=\tr X \mu {h_1}$;
\item for every $s\in[0,1]$ such that $\tr {X} \mu {h_1}=\Flux{X} {\mu} {{h_1}^{-1}(s)}$ and for every point $p$ in the boundary of the tangent part of $S=h_1^{-1}(s)$, either $p$ belongs to a periodic orbit or its orbit limits to a periodic orbit. In both cases the periodic orbit is contained in $S$ and is thus unknotted.
\end{itemize}
\end{coro}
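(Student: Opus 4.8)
The plan is to observe that the corollary is exactly the constructive residue of the proof of Theorem~\ref{T:minperiodic}: that proof manufactures, starting from $h$, a family of competing height functions through two kinds of flowbox deformations, and the corollary simply records the configuration at which no further deformation is available. First I would start from the given $h$ realizing $\Tr X \mu = \tr X \mu h$, fix a trunkenness-realizing level $S = h^{-1}(s)$, and recall from the proof of the theorem the decomposition $S = S^+ \cup S^- \cup S^t$ together with the boundary splitting $\partial S^t = B^+ \cup B^-$.

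The key structural point is that the flux-reducing deformation is forbidden in our setting. Indeed, the deformation built from an opposite-sign pair $p \in B^-$, $q \in B^+$ joined by an orbit segment inside $S^t$ produces a height function $h_2$ with $\tr X \mu {h_2} < \tr X \mu h = \Tr X \mu$; since $\Tr X \mu$ is an infimum over all height functions, this is impossible. Hence on every trunkenness-realizing level there is no opposite-sign pair joined through $S^t$, and only the flux-preserving deformations of Proposition~\ref{P:Bplusminus} (the same-sign case, $p,q \in B^+$ or $p,q \in B^-$) remain available. Each of these satisfies $\tr X \mu h = \tr X \mu {h_1}$ and strictly shrinks the tangent part by cutting a strip across one of its components, so applying them to the finitely many components of the compact tangent set, near every trunkenness-realizing level at once, terminates in a height function $h_1$ with $\tr X \mu {h_1} = \tr X \mu h$; this is the first bullet.

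It then remains to read off the boundary behaviour at the terminal function $h_1$. I would argue by cases on a point $p$ in the boundary of the tangent part of a trunkenness-realizing level $S_1 = h_1^{-1}(s)$. If the forward or backward orbit of $p$ stays inside $S_1$, then Lemma~\ref{L:poincarebendixson} (Poincar\'e--Bendixson on the sphere $S_1$) shows that it limits to a periodic orbit contained in $S_1$. Otherwise the orbit of $p$ leaves $S_1$ on both sides, so it cuts out an orbit segment through the tangent part joining two boundary points; by the previous paragraph this pair is same-sign, and a further flux-preserving deformation would then be available unless the boundary component carrying $p$ is already a periodic orbit --- which, since $h_1$ is terminal, must be the case. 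In either alternative the periodic orbit lies on the $2$-sphere $S_1$, hence is unknotted, giving the second bullet.

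The main obstacle I anticipate is making the termination and the terminal characterization rigorous. One must verify that the same-sign deformations genuinely decrease a well-defined complexity of the tangent set (for instance its number of non-disc components), so that only finitely many are needed, and one must show that when no such deformation reduces the tangent set further the remaining boundary components are forced to be periodic orbits rather than some other degenerate tangency. The flowbox constructions and flux computations are already supplied by the proof of Theorem~\ref{T:minperiodic}, so the genuinely new work is the bookkeeping that pins down the stopping configuration and the careful dichotomy for boundary points whose orbits remain trapped in $S_1$.
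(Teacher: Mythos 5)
Your proposal follows the paper's route exactly: the paper gives no separate proof of this corollary beyond the remark that it is extracted from the proof of Theorem~\ref{T:minperiodic}, and your reconstruction---minimality of $h$ forbids the flux-reducing (opposite-sign) deformation, the trunk-preserving (same-sign) deformations of Proposition~\ref{P:Bplusminus} are applied finitely often by compactness, and Lemma~\ref{L:poincarebendixson} handles boundary points with a semi-orbit trapped in the level---is precisely that extraction, including an honest flagging of the same delicate points (termination, terminal characterization) that the paper itself leaves implicit. The one clause to tighten is the final ``unless'': when the orbit of $p$ leaves $S_1$ on both sides, the resulting same-sign pair yields a further trunk-preserving deformation contradicting terminality, so that case is simply excluded and every boundary point falls under the Poincar\'e--Bendixson alternative; the ``boundary component is a periodic orbit'' escape is vacuous there, since such an orbit would be contained in $S_1$.
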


\section{Trunkenness of knotted tubes}
\label{S:Examples}

In this section we compute the trunkenness with respect to a volume form of some vector fields supported in a tubular neighborhood of a link or knot. 
Our statement is reminiscent from Zupan's theorem~\cite{Zupan} concerning the trunk of the cable of a knot. 
Recall that for a divergence-free vector field supported on a tube, the fluxes through all meridian discs are equal.


\begin{proposition}\label{P:KnottedTubes}(see Figure~\ref{F:Tubes} right)
	Suppose that $X$ is a $\Omega$-preserving vector field supported on tubes~$T_1, \dots, T_j$ that are tubular neighborhoods of knots~$k_1,\dots, k_j$, each knot~$k_i$ being non-trivial, and such that $X$ is transverse to all canonical meridian discs and the flux of~$X$ through each of them is constant equal to some~$F\neq 0$, then we have 
	\[\displaystyle{\Tr X \Omega =F\cdot \kTr{k_1\cup\dots\cup k_j}}.\]
\end{proposition}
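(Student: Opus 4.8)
The plan is to prove the two inequalities separately: the upper bound by exhibiting a well-adapted height function, and the lower bound by a cabling argument resting on Theorem~\ref{T:Continuity} and on Zupan's theorem. Throughout, set $L=k_1\cup\dots\cup k_j$, and recall that since $X$ is divergence-free on each tube, $\omega:=\iota_X\Omega$ is a closed $2$-form supported in the tubes with $\int_{\Delta}\omega=F$ for every canonical meridian disc $\Delta$, and that $\Flux X \Omega S=\int_S|\omega|$.

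For the upper bound $\Tr X \Omega \le F\cdot\kTr{k_1\cup\dots\cup k_j}$, by Equation~\eqref{eq:defTr} I would choose a height function $h$ with $\max_{t}\sharp\{L\cap h^{-1}(t)\}=\kTr L$, and then adapt its defining diffeomorphism to the tubes so that near each intersection point of a core $k_i$ with a level the surface cuts straight across $T_i$ in a canonical meridian disc, and meets $T_i$ nowhere else. In a flow box around such a crossing, $\omega$ restricts to the meridian disc as an area form of total mass $F$, and a direct computation shows that each transverse meridian crossing contributes exactly $F$ to the geometric flux, independently of the tilt of the level. Summing over the finitely many crossings gives $\Flux X \Omega {h^{-1}(t)}=F\cdot\sharp\{L\cap h^{-1}(t)\}$ for every $t$, hence $\tr X \Omega h=F\cdot\max_t\sharp\{L\cap h^{-1}(t)\}=F\cdot\kTr L$, which bounds the trunkenness from above.

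For the lower bound I would mimic the treatment of the Seifert flows in Proposition~\ref{P:Tpq}. Because the flux through every meridian disc equals $F$, the flow carries a transverse invariant measure on each tube, and I would approximate the normalized restriction of $\Omega$ to the tubes by the linear Dirac measures of a sequence of links $K_n$ obtained by placing $n$ parallel copies of each core $k_i$ inside $T_i$, equidistributed with respect to the flux measure on a meridian cross-section. Each $K_n$ is an $n$-cable of $L$, so Zupan's theorem~\cite{Zupan} yields $\kTr{K_n}=n\cdot\kTr L$. Since the trunkenness of a linear Dirac measure equals the trunk of the underlying link (Equation~\eqref{eq:measureintersection}), Theorem~\ref{T:Continuity} identifies $\Tr X \Omega$ with $\lim_{n\to\infty}\frac1{t_n}\kTr{K_n}$, where $t_n$ is the total length of $K_n$; tracking the normalization—the flux $F$ times the first-return period equals the swept volume, so that the copies of $k_i$ carry total mass $\vol(T_i)$ in the limit—turns this limit into $F\cdot\kTr L$. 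Together with the upper bound this proves the proposition.

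The main obstacle is the lower bound, and specifically the justification that the minimax of the geometric flux is genuinely captured by the trunk of the approximating cables. The flux through a single level can be strictly smaller than $F$ times the number of core crossings, since sheets of a level lying inside a fat tube can cancel in pairs away from the core; the topological content is recovered only after both the minimax over height functions and the passage to the limit, which is precisely where Zupan's theorem and the equidistribution of the copies do the work. A secondary technical point is to arrange the approximating pairs so that they satisfy the convergence hypotheses of Theorem~\ref{T:Continuity} even when the orbits of $X$ inside the tubes are not periodic; here I would replace $X$ inside each tube by the suspension of a periodic approximation of its first-return map to a meridian disc, whose periodic orbits are exactly the copies forming $K_n$.
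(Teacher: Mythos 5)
Your upper bound is fine and is essentially the (implicit) easy direction: thin the tubes so that each level of a trunk-realizing height function for $L=k_1\cup\dots\cup k_j$ meets them only in meridian-like discs, each contributing $F$ to the flux. The problem is your lower bound, which departs from the paper's argument and where the real content lies. The paper does \emph{not} go through approximating cables at all here: it argues directly that if some height function $g$ had $\tr X \Omega g < F\cdot\kTr{L}$, then after applying a volume-preserving diffeomorphism $f_\epsilon$ that makes the tubes very thin, every level of $g\circ f_\epsilon^{-1}$ would have to meet the cores in fewer than $\kTr L$ points (since each transverse core crossing forces flux at least about $F$), contradicting the definition of the trunk of $L$; a separate lemma handles non-Morse restrictions to $L$. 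This uses only the minimax definition of $\kTr L$ and no satellite formula.

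Your route has two concrete gaps. First, you invoke Zupan's theorem to get $\kTr{K_n}=n\cdot\kTr L$, but the cited result concerns satellites of \emph{knots}; here the companion is a multi-component link (and, see below, the actual patterns need not be $n$ parallel copies), so the trunk lower bound for $K_n$ that your whole argument rests on is exactly the statement you would still have to prove --- and it is essentially equivalent in difficulty to the proposition itself. Second, the approximating pairs $(X_n,\mu_n)$ are not constructed. If you insist that $K_n$ consist of $n$ parallel longitudinal copies of each $k_i$, you must replace the first-return map by the identity, and then $X_n$ does not converge to $X$, violating the hypotheses of Theorem~\ref{T:Continuity}; if instead you use a periodic approximation $\psi_N$ of the true return map, its orbits wind $N$ times longitudinally, so $K_n$ is a union of connected high-winding cables, not parallel copies, and you again need an unproved trunk estimate for these satellites of a link, plus a verification that the chosen periodic orbits equidistribute towards $\Omega$ restricted to the tubes. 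None of these steps is unreasonable, but as written the lower bound is not established; the paper's thin-tube contradiction argument avoids all of them.
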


Note that the trunkenness in this case is independent from the dynamics of~$X$ inside each tube, and in particular of the first-return map on a meridian disc, exactly like the trunk of a cable link is independent of the twist number of the cabling.  
However the restriction that all the knots~$k_i$ are non-trivial is necessary.  
Indeed the trunkenness of a vector field supported on a tube that forms a trivial knot may be smaller than $2F$. 
For example take an unknotted solid torus in $\mathbb{R}^3$ that supports a vector field obtained by the suspension of the identity (with no extra twist) and assume that the vector field is zero outside the solid torus (see Figure~\ref{F:Tubes} top left). 
The trunkenness of such vector field is zero: it is enough to consider a height function whose levels are always tangent to the vector field. 
Observe that the levels of such a height function intersect the solid torus along circles or annuli.
As this example shows, the trunkenness of a vector field supported in the neighborhood of a link with at least one unknotted component seems hard to determine (see Figure~\ref{F:Tubes}).

\begin{figure}
	\includegraphics[width=.7\textwidth]{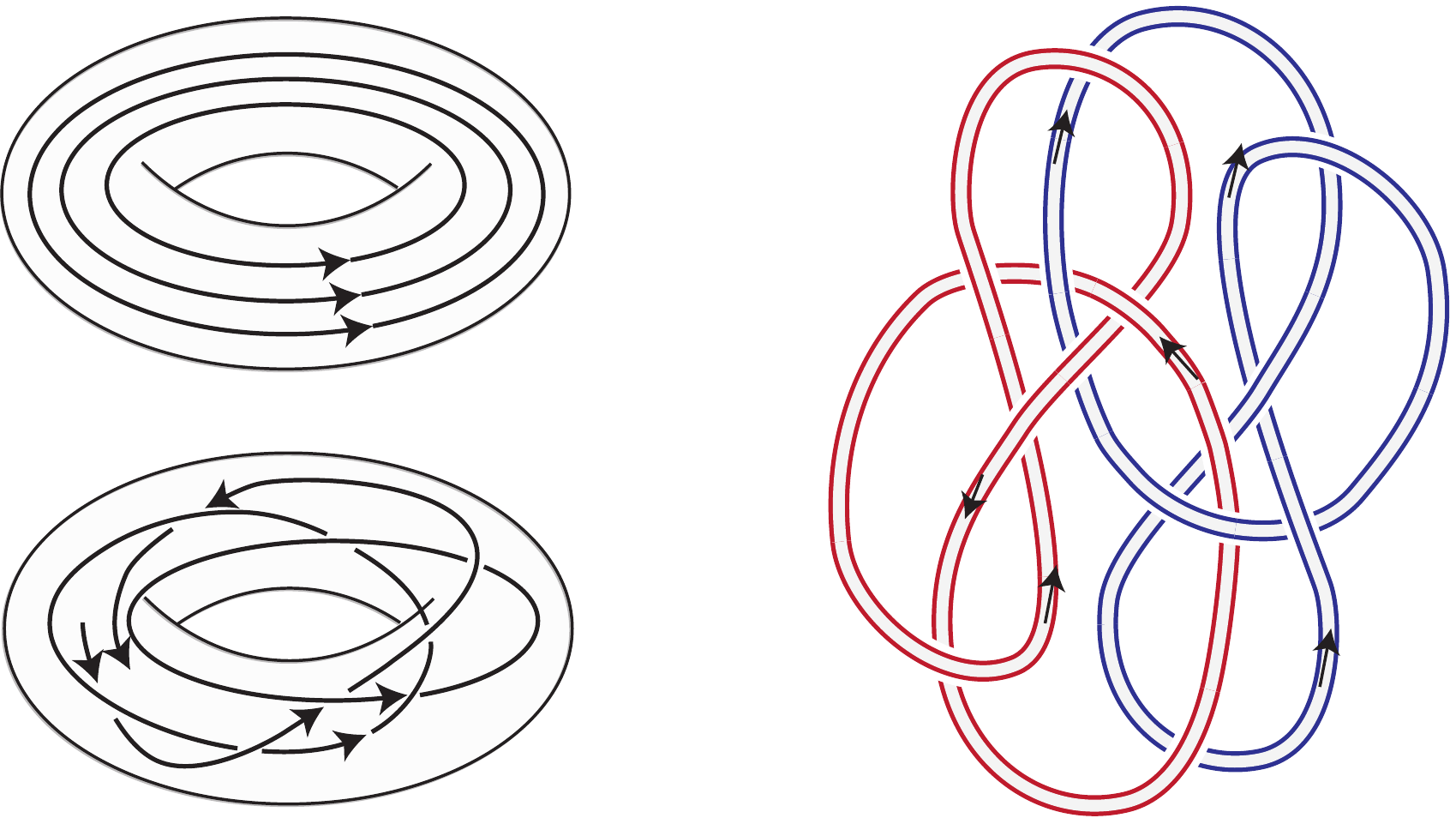}
	\caption{\small On the left, two vector fields supported on the unlink and both transverse to the canonical meridian discs. 
	The top one has all its orbits horizontal, hence its trunkenness is~$0$ while the bottom one has linked orbits. 
	Its trunkenness is at most~$2\cdot\Flux X \Omega D$ but it might be smaller. 
	On the right a vector field supported on a link~$L$ both of which components are figure-eight knots. 
	If the flux of the vector field through the canonical meridian discs of both components is equal to~$F$, then the trunkenness of this vector field is~$F\cdot \kTr L = 8F$ by Proposition~\ref{P:KnottedTubes}.}
	\label{F:Tubes}
\end{figure}

\begin{proof}
Let $L=k_1\cup k_2\cup \ldots\cup k_n\subset \Sph^3$ be a link all of whose components are non-trivial knots. 
Let $h$ be a height function such that $\kTr L =\ktr L h$. 
Observe that we can assume that $h$ exists since for a knot or link the trunk is defined by a minimum (see
Definition~\ref{D:TrunkKnot}). 
The trunk of $L$ is then realized by one or more level sets of $h$, that is, there exist $t_1, t_2,\ldots,t_k$ such that for any $1\leq i\leq k$ there is a critical value $c_i$ of $h|_L$ so that $t_i<c_i<t_{i+1}$ and  $\kTr L=\sharp \{ L\cap h^{-1}(t_i)\}$. 
Let 
$$m_j(t_i)=\sharp \{ k_j\cap h^{-1}(t_i)\}, \qquad \mbox{for} \qquad 1\leq j\leq n \qquad \mbox{and} \qquad 1\leq i\leq k.$$

For any $0<\epsilon<1$ there is an $\Omega$-preserving diffeomorphism
$f_\epsilon$ of $\Sph^3$ that makes the solid tori $T_j$ longer and of
radius $\epsilon r_j$ for any $1\leq j\leq n$, with $r_j$ the radius
of the tubular neighborhood $T_j$. Let $X_\epsilon$ be the
vector field obtained from $X$ via $f_\epsilon$, then
$\Tr X \Omega =\Tr {X_\epsilon} \Omega$ for every $\epsilon$.

Assume that there is a height function $g$ and an index~$i$ such that $\tr X \Omega g< F\cdot \sum_{j=1}^n m_j(t_i)$. 
Consider the height function $g_\epsilon=g \circ f_\epsilon^{-1}$, then Theorem~\ref{T:invariance} implies $\tr X \Omega g=\tr {X_\epsilon} \Omega {g_\epsilon}$ and
$$\tr {X_\epsilon} \Omega {g_\epsilon}<F\cdot\sum_{j=1}^n m_j(t_i).$$
We claim that $\ktr L {g_\epsilon} < \sum_{j=1}^n m_j(t_i)=\kTr L$ holds, which is absurd.

Assume first that $g_\epsilon$ restricted to the core of the tori $f_\epsilon(T_j)$ is a Morse function. 
The core of the tori form the link $L$ thus $\ktr L {g_\epsilon}\geq \kTr L $ in one hand. 
But since
\[
	\tr {X_\epsilon} \Omega {g_\epsilon}
	=\Flux {X_\epsilon} \Omega {g_\epsilon^{-1}(T)}
	<F\cdot\min_{1\leq i\leq k} \sum_{j=1}^n m_j(t_i)
\]
for some $T\in[0,1]$, taking $\epsilon$ arbitrarily small tells us that the number of discs in the intersection of $g_\epsilon^{-1}(T)$ and the tori is smaller than $\kTr L$. 
In other words we conclude that $\ktr L {g_\epsilon}<\kTr L$, which is impossible.
We are left with the case where $g_\epsilon$ is not a Morse function when restricted to~$L$, seen as the core of the tori $f_\epsilon(T_j)$. 
The following lemma finishes the proof of the proposition.

\begin{lemma}
	In the previous context, there exists another height function $g_\epsilon''$ so that:
	\begin{itemize}
		\item $g_\epsilon''|_L$ is a Morse function;
		\item $\tr{X_\epsilon} \Omega {g_\epsilon}= \tr {X_\epsilon} \Omega {g_\epsilon''}$.
	\end{itemize}
\end{lemma}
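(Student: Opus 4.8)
The strategy is to reduce the lemma to a statement about the quantisation of the flux: I will show that for the thin tubes at hand the trunkenness of $X_\epsilon$ relative to \emph{any} height function is an integer multiple of $F$, so that replacing $g_\epsilon$ by a nearby Morse function cannot change the value at all. In this way no delicate bookkeeping of the degenerate critical points of $g_\epsilon|_L$ is needed; a generic perturbation together with a discreteness argument will suffice.

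First I would prove that, for $\epsilon$ small and for any height function $g$, one has $\tr{X_\epsilon}\Omega{g}=F\cdot M_g$, where $M_g:=\max_{t\in[0,1]}\sharp\{L\cap g^{-1}(t)\}$ is the maximal number of points in which the core link meets a level of $g$. At a regular value $t$ of $g|_L$ the level $g^{-1}(t)$ meets each tube, because the tube is thin, in exactly one disc per transverse crossing of the core, and each such disc $D$ is transverse to $X_\epsilon$. Since $\iota_{X_\epsilon}\Omega$ is closed and $X_\epsilon$ is tangent to the lateral boundary of the tube, Stokes' theorem applied to the solid subtube cobounded by $D$ and a meridian disc gives $\int_D\iota_{X_\epsilon}\Omega=F$; as $X_\epsilon$ vanishes off the tubes and $D$ is transverse, this yields $\Flux{X_\epsilon}\Omega{g^{-1}(t)}=F\cdot\sharp\{L\cap g^{-1}(t)\}$. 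The map $t\mapsto\Flux{X_\epsilon}\Omega{g^{-1}(t)}$ is continuous, the spheres varying smoothly and $|\iota_{X_\epsilon}\Omega|$ being a fixed density; moreover any intersection disc carries flux at most $F$ in absolute value, with equality exactly when it is transverse to $X_\epsilon$, so over a critical value of $g|_L$ the flux can only interpolate between the two neighbouring regular plateaus without exceeding either. Hence the maximum is attained on a regular plateau, where it equals $F\cdot M_g$, and in particular $\tr{X_\epsilon}\Omega{g}\in F\,\Z_{\ge 0}$ for every height function $g$.

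Next I would produce the Morse competitor and conclude. Writing $g_\epsilon=h_0\circ\phi$, a generic arbitrarily $C^\infty$-small perturbation $\phi''$ of $\phi$ puts $\phi''(L)$ in Morse position with respect to $h_0$ by Thom transversality, so $g_\epsilon'':=h_0\circ\phi''$ is a height function with $g_\epsilon''|_L$ Morse and $g_\epsilon''$ is $C^1$-close to $g_\epsilon$. Since $g_\epsilon''=g_\epsilon\circ\eta$ for a diffeomorphism $\eta$ close to the identity, every level $g_\epsilon''^{-1}(t)=\eta^{-1}(g_\epsilon^{-1}(t))$ is $C^1$-close, uniformly in $t$, to the corresponding level of $g_\epsilon$; as the density $|\iota_{X_\epsilon}\Omega|$ is fixed, this makes $|\tr{X_\epsilon}\Omega{g_\epsilon''}-\tr{X_\epsilon}\Omega{g_\epsilon}|$ arbitrarily small. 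Both numbers lie in $F\,\Z$ by the previous step, so once the perturbation is small enough that this difference is less than $F$ they coincide, which is exactly the assertion of the lemma.

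The main obstacle is the quantisation step. One must check carefully that for thin enough tubes a level of $g$ meets the tubes only in one transverse disc per core crossing (ruling out spurious grazing components along the lateral surface), that each such disc indeed carries flux exactly $F$, and that the pinching of $|\iota_{X_\epsilon}\Omega|$ at tangencies keeps the flux from ever overshooting $F\cdot M_g$. Once this exact $F$-quantisation is in hand, the genericity of the Morse perturbation, the $C^1$-continuity of the flux and the upgrade from ``close'' to ``equal'' are all routine.
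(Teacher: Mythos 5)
Your argument hinges entirely on the quantisation claim that $\tr{X_\epsilon}\Omega{g}=F\cdot M_g\in F\,\Z_{\ge 0}$ for \emph{every} height function $g$, and that claim is false. The failure is exactly the ``spurious grazing components'' you flag as the main obstacle: a level $g^{-1}(t)$ can meet a tube $T_i$ in a component $D$ disjoint from the core (a disc whose boundary is a contractible circle on $\partial T_i$, produced whenever the level sphere is tangent to the lateral boundary of the tube and dips inside), and no amount of shrinking the tube radius prevents this for a general height function. For such a component the \emph{net} flux $\int_D\iota_{X_\epsilon}\Omega$ vanishes by Stokes, but the geometric flux $\int_D|\iota_{X_\epsilon}\Omega|=2\int_{D^+}\iota_{X_\epsilon}\Omega$ is a positive real number with no reason to lie in $F\,\Z$; similarly a component meeting the core once transversally need not be transverse to $X_\epsilon$ throughout its interior, so its geometric flux is $F$ plus a non-quantised positive term. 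Hence $\Flux{X_\epsilon}\Omega{g^{-1}(t)}$, and therefore $\tr{X_\epsilon}\Omega{g}$, need not be an integer multiple of $F$. Moreover your interpolation argument across critical values of $g|_L$ is carried out only for Morse-type tangencies, whereas the lemma is invoked precisely when $g_\epsilon|_L$ is \emph{not} Morse (e.g.\ tangent to a level along a whole subarc of $k_1$), where the flux contribution of the tangency region is again an arbitrary real number. Once the discreteness is gone, your generic $C^\infty$-small perturbation only yields $|\tr{X_\epsilon}\Omega{g_\epsilon''}-\tr{X_\epsilon}\Omega{g_\epsilon}|$ small, not the exact equality the lemma asserts.

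By contrast, the paper obtains exact equality without any global rigidity: it deforms the levels of $g_\epsilon$ \emph{locally}, with support in a small tubular neighbourhood $N(\delta,k_1)$ of the tangent component, splitting into the cases where the boundary of a tangent disc is contractible in $\partial N(\delta,k_1)$ or is a meridian, and choosing the deformation so that the flux through every level is preserved by construction (as in the shadow-preserving deformations of Proposition~\ref{P:Bplusminus}). If you want to salvage your route, you would either have to prove the quantisation only for the specific levels realizing the maximum after first removing grazing components by a preliminary flux-preserving isotopy --- which essentially reproduces the paper's local surgery --- or weaken the conclusion to approximate equality, which would still suffice for the proof of Proposition~\ref{P:KnottedTubes} (where only the strict inequality $\tr{X_\epsilon}\Omega{g_\epsilon''}<F\cdot\kTr{L}$ is needed) but does not prove the lemma as stated.
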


\begin{proof}
Let $S=g_\epsilon^{-1}(t)$ be a level that is tangent to at least one of the components of $L$ and let $k_1$ be one of the tangent components. 
We will modify $g_\epsilon$ in a neighborhood of $k_1$, this modification taking place near the part of~$k_1$ that is tangent to $S$.

Observe that $S$ is tangent to a closed subinterval of $k_1$, since if it were tangent to all of $k_1$ then $k_1$ would  be the trivial knot, in contrary to our assumption on $L$. 
For $\delta<\epsilon r_1$, consider $N(\delta,k_1)$ the $\delta$-tubular neighborhood of $k_1$ that is contained in $f_\epsilon(T_1)$. 
For $\delta$ sufficiently small, the intersection $S\cap N(\delta,k_1)$ is composed by a finite number of discs. 
Let $D_1$ be one of these discs, then $\partial D_1$ is included in $\partial N(\delta,k_1)$ and is either a contractible circle in $\partial N(\delta,k_1)$ or not.

If $\partial D_1$ is contractible in~$\partial N(\delta,k_1)$, then we can locally deform the levels of $g_\epsilon$
near $D_1$, to obtain a new height function $g_\epsilon'$ such that:
\begin{itemize}
	\item the disc $D_1'$ obtained from $D_1$ is tangent to $k_1$ at one point;
	\item near $D_1'$ all levels are transverse to $k_1$;
	\item $\tr {X_\epsilon} \Omega {g_\epsilon}=\tr {X_\epsilon} \Omega {g_\epsilon'}$.
\end{itemize}
The support of the deformation is contained in $N(\delta,k_1)$.
Performing this deformation near any disc in $S\cap N(\delta,k_1)$ whose boundary is contractible in $\partial N(\delta,k_1)$, we can assume now that the discs of $S$ that are tangent to $k_1$ have non-contractible boundary in $\partial N(\delta,k_1)$. 
Thus these discs are homologous to meriodional discs $\mathbb{D}^2\times \{\theta\}$ of $T_1$. 
Again, we can locally deform the levels, so that this discs are transverse to $k_1$. 
The deformed height function $g_\epsilon''$ is Morse when restricted to~$L$. 

Thus $g_\epsilon''$ is Morse when restricted to $k_1$. 
Repeating this process if necessary, we obtain a that $g_\epsilon''$ is Morse when restricted to $L$. 
\end{proof}
\end{proof}


\bibliographystyle{siam}

\end{document}